\theoremstyle{plain} 
\newtheorem{theorem}{\indent\sc Theorem}[section]
\newtheorem{lemma}[theorem]{\indent\sc Lemma}
\newtheorem{proposition}[theorem]{\indent\sc Proposition}
\theoremstyle{definition} 
\newtheorem{definition}[theorem]{\indent\sc Definition}
\newtheorem{remark}[theorem]{\indent\sc Remark}
\newcommand{\N}{\mathbb{N}}
\newcommand{\Z}{\mathbb{Z}}
\newcommand{\R}{\mathbb{R}}
\newcommand{\Sp}{\mathbb{S}}
\newcommand{\C}{\mathbb{C}}
\newcommand{\Hyp}{\mathbb{H}}
\newcommand{\B}{\mathbb{B}}
\newcommand{\bigO}{\mathcal{O}}
\newcommand{\su}{\mathfrak{su}_2}
\newcommand{\sll}{\mathfrak{sl}_2}
\newcommand{\GL}{\mathrm{GL}_2}
\newcommand{\SL}{\mathrm{SL}_2}
\newcommand{\SU}{\mathrm{SU}_2}
\newcommand{\GU}{\mathrm{GU}_2}
\newcommand{\CMC}{$\mathrm{CMC}\,$}
\newcommand{\ODE}{$\mathrm{ODE}\;$}
\newcommand{\dd}{\mathrm{d}}
\newcommand{\identity}{\mathbbm{1}}
\DeclareMathOperator{\diag}{diag}
\DeclareMathOperator{\sign}{sign}
\DeclareMathOperator{\fix}{fix}
\DeclareMathSymbol{\Nu}{\mathalpha}{operators}{"4E}
\newcommand{\ph}[1]{\left[#1\right]}
\newcommand{\lc}[1]{\left[\left[#1\right]\right]}
\newcommand{\out}[1]{\setminus\lbrace#1\rbrace}
\DeclareMathAlphabet{\mathcal}{OMS}{cmsy}{m}{n}
\SetMathAlphabet{\mathcal}{bold}{OMS}{cmsy}{b}{n}
\begin{document}

\title[CMC Trinoids with one Irregular End]{Constant Mean Curvature Trinoids with one Irregular End} 

\author[M. Kilian]{Martin Kilian} 

\author[E. Mota]{Eduardo Mota} 

\author[N. Schmitt]{Nicholas Schmitt} 


\subjclass[2010]{ 
Primary 53A10.
}
%
\keywords{ 
Constant mean curvature surfaces, irregular singularities, period problem.
}
\address{
Department of Mathematics, University College Cork, Ireland
}
\email{m.kilian@ucc.ie}

\address{
Department of Mathematics, University College Cork, Ireland
}
\email{eduardomotasanchez@ucc.ie}

\address{
Institut f{\''u}r Mathematik, TU Berlin, Germany
}
\email{schmitt@math.tu-berlin.de}

\maketitle

\begin{abstract}
	We construct a new five parameter family of constant mean curvature trinoids with two asymptotically Delaunay ends and one irregular end.
\end{abstract}

%

  \section*{Introduction}

    The generalised Weierstrass representation \cite{DPW} constructs conformal constant mean curvature (CMC) immersions in Euclidean $3$-space from a holomorphic 1-form $\xi$ on a Riemann surface $\Sigma$. The associated period problem involves showing that the monodromy group of $\xi$ is pointwise unitarisable along the unit circle of the \emph{spectral parameter} $\lambda$. For the case that $\Sigma$ is the thrice-punctured Riemann sphere and $\xi$ has three regular singular points at the punctures, the resulting three-parameter family of \CMC trinoids have asymptotically Delaunay ends \cite{KilKRS}. In this paper we extend this result by replacing one of the regular singularities with an irregular singularity of rank $1$. The corresponding second-order scalar \ODE is the confluent Heun equation (CHE) with five free parameters. The monodromy can be computed by an asymptotic formula for the connection matrix between solutions at the two regular singular points given by Sch\"afke-Schmidt \cite{ScSc}. In this way, we construct a new five-parameter family of \CMC trinoids with two Delaunay ends and one irregular end.\\


  %

  \section{The Generalised Weierstrass Representation and the Monodromy Problem}\label{sec:dpwmethod}

    Let us briefly recall the generalized Weierstrass representation \cite{DPW} to set the notation and conventions adapted from \cite{KilKRS}. It consists of the following three steps:

    (i) On a connected Riemann surface $\Sigma$, let $\xi$ be a holomorphic 1-form, called a \emph{potential}, with values in the loop algebra of maps $\Sp^1\to\sll(\C)$. The potential $\xi$ has a simple pole in its upper right entry in the loop parameter $\lambda$ at $\lambda=0$, and has no other poles in the unit $\lambda$ disk. Moreover, the upper-right entry of $\xi$ is non-zero on $\Sigma$.  Let $\Phi$ be a solution of
    \begin{equation} \label{eq:IVP}
      \dd\Phi = \Phi\,\xi\; .
    \end{equation}

    (ii) Let $\Phi=F\,B$ be the pointwise Iwasawa factorization on the universal cover $\widetilde\Sigma$.

    (iii) Then $f=F'F^{-1}$ is an associated family of conformal \CMC immersions $\widetilde\Sigma \to \su\cong\R^3$. The prime denotes differentiation with respect to $\theta$, where $\lambda=e^{i\theta}$. 
      
    The gauge action is defined by $\xi.g:=g^{-1}\xi g+g^{-1}\dd g$. If $\dd\Phi = \Phi\,\xi$, then $\Phi g$ solves $\dd\Psi=\Psi(\xi.g)$. If $g$ is a map on $\Sigma$ with values in a positive loop group of $\SL \C$, then $\xi.g$ is again a potential.

    Now let $\Sigma = \mathbb{C}\out{z_0, z_1, z_\infty}$ be the thrice-punctured Riemann sphere with punctures $z_0 = 0,\,z_1 = 1$ and $z_\infty = \infty$. Let $\Delta$ denote the group of deck transformations of the universal cover $\tilde{\Sigma}$. Let $\xi$ be a holomorphic potential on $\Sigma$. Then $\xi(\tau(z),\lambda) = \xi(z,\lambda)$ for all $\tau \in \Delta$. Let $\Phi$ be a solution of $\dd\Phi = \Phi \xi$. Note that in general $\Phi$ it is only defined on $\tilde{\Sigma}$. We define the \emph{monodromy} $M_\tau$ with respect to $\tau$ by
    \begin{equation}\label{eq:monodromygeneral}
      M_\tau = \Phi(\tau(z),\lambda) \, \Phi(z,\lambda)^{-1}.
    \end{equation}
    The period problem cannot be solved simultaneously for the whole associated family, so we contend ourselves with solving it for the member of the associated family $\lambda = 1$. For $i\in\lbrace 0,\,1,\,\infty\rbrace$ let $\gamma_i$ be a loop around the puncture $z_i$ and $M_i$ the monodromy of $\Phi$ along this loop. The period problem consists of the following three conditions \cite{KilKRS}: For all $i\in\lbrace 0,\,1,\,\infty\rbrace$
    \begin{equation}\label{eq:monodromyproblem}
      \left\{
      \begin{array}{lll}
        M_i&\text{takes values in the unitary loop group},\\
        \left. {M_i}\right|_{\lambda = 1} &= \pm \rm{Id},\\
        \left. \partial_\lambda {M_i} \right|_{\lambda = 1} &= 0 \,.
      \end{array}
      \right.
    \end{equation}
    %
  %

  \section{The Associated Second Order \ODE}\label{sec:associatedode}
    Consider a holomorphic potential 
    \begin{equation}\label{eq:0pot}
      \eta_0=\begin{pmatrix} S(z,\lambda) & R(z,\lambda)\\ T(z,\lambda) & -S(z,\lambda)\end{pmatrix}\;dz
    \end{equation}
    on $\Sigma=\C\out{z_0, z_1, z_\infty}$. Set
    \begin{equation}\label{eq:0gauge}
      g=\begin{pmatrix} e^{-f(z,\lambda)} & 0\\ 0 & e^{f(z,\lambda)}\end{pmatrix},\quad\text{where}\;f(z,\lambda)=\int_{z_*}^z S(w,\lambda)\; dw.
    \end{equation}
    Then $g$ is a positive loop and gauging the holomorphic potential $\eta_0$ by $g$ yields
    \begin{equation}\label{eq:1pot}
      \eta_1:=\eta_0.g=\begin{pmatrix} 0 & e^{2 f}R(z,\lambda)\\ e^{-2 f}T(z,\lambda) & 0\end{pmatrix}\;dz.
    \end{equation}
    Hence we can assume without loss of generality that our potential is off-diagonal. Let us then consider a potential, denoted $\eta$, of the form
    \begin{equation}\label{eq:offdpot}
      \eta=\begin{pmatrix} 0 & \nu(z,\lambda)\\ \rho(z,\lambda) & 0\end{pmatrix}\;dz.
    \end{equation}
    For the purpose of our work it will be convenient to work with the associated scalar second order \ODE corresponding to the $2 \times 2$ system (\ref{eq:IVP}), given by the following straightforward
    \begin{lemma} \label{associatedode}
      Solutions of $\dd\Phi=\Phi\eta$ are of the form
      \begin{equation}\label{eq:generalsolution}
      \begin{pmatrix} y_1'/\nu & y_1 \\ y_2'/\nu & y_2 \end{pmatrix}
      \end{equation}
      where $y_1$ and $y_2$ are a fundamental system of the scalar \ODE
      \begin{equation}\label{eq:scalar2ndode}
        y'' - \frac{\nu'}{\nu}\, y' - \rho\,\nu\,y = 0.
      \end{equation}
    \end{lemma}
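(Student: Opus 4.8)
The plan is to verify directly that the matrix in \eqref{eq:generalsolution} solves $\dd\Phi = \Phi\eta$, given that $y_1, y_2$ satisfy the scalar \ODE \eqref{eq:scalar2ndode}. Writing $\Phi = \begin{pmatrix} y_i'/\nu & y_i \end{pmatrix}$ row by row, the matrix equation $\dd\Phi = \Phi\eta$ with $\eta$ as in \eqref{eq:offdpot} decouples into two identical scalar systems, one for each $i \in \{1,2\}$. So it suffices to treat a single row $(p, q) := (y'/\nu, y)$ where $y$ solves \eqref{eq:scalar2ndode}, and show that $(p,q)' = (p,q)\eta$.

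First I would compute the right-hand side: since $\eta = \begin{pmatrix} 0 & \nu \\ \rho & 0 \end{pmatrix}\,dz$, the product $(p,q)\eta$ equals $(q\rho,\; p\nu)\,dz$. Matching this against $(p',q')\,dz$ gives the two scalar conditions $p' = q\rho$ and $q' = p\nu$. The second condition is immediate by construction: $q' = y' = (y'/\nu)\nu = p\nu$. For the first condition I would differentiate $p = y'/\nu$ using the quotient rule, obtaining
\begin{equation}\label{eq:pprime}
  p' = \frac{y''}{\nu} - \frac{y'\nu'}{\nu^2} = \frac{1}{\nu}\Bigl(y'' - \frac{\nu'}{\nu}\,y'\Bigr).
\end{equation}
Now I would invoke the scalar \ODE \eqref{eq:scalar2ndode}, which rearranges to $y'' - (\nu'/\nu)y' = \rho\,\nu\,y$. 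Substituting this into the bracket in \eqref{eq:pprime} yields $p' = \rho\,\nu\,y/\nu = \rho\,y = \rho\,q = q\rho$, which is exactly the first condition.

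Since both scalar conditions hold for each row, the matrix $\Phi$ in \eqref{eq:generalsolution} solves $\dd\Phi = \Phi\eta$. To finish I would observe that $y_1, y_2$ being a fundamental system of \eqref{eq:scalar2ndode} guarantees that $\Phi$ is invertible: its determinant is $(y_1' y_2 - y_2' y_1)/\nu = -W/\nu$, where $W$ is the Wronskian of $y_1, y_2$, which is nonvanishing precisely when the two solutions are independent. This confirms that \eqref{eq:generalsolution} furnishes a genuine fundamental matrix solution, and conversely any solution arises this way by reading off its second column as a pair of scalar solutions. No step here is a real obstacle — the computation is the routine calculation the lemma advertises — so the only point requiring a word of care is the nonvanishing of $\nu$ (guaranteed by the standing assumption that the upper-right entry of the potential is nonzero), which makes the substitution $p = y'/\nu$ and the determinant argument legitimate throughout.
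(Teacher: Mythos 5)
Your proof is correct and is exactly the computation the paper has in mind: the paper states this lemma without proof, calling it straightforward, and your row-by-row verification (the conditions $q'=p\nu$, $p'=q\rho$, the determinant $-W/\nu$ via the Wronskian, and the remark that $\nu\neq 0$ by the standing assumption on the potential) is precisely that routine check, including the converse direction of reading the scalar solutions off the second column. Nothing further is needed.
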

    %
  %

  \section{Two Regular Singular Points}\label{sec:regularpoints}

    Our goal is to construct \CMC trinoids for which two ends are regular and one end is irregular.  We will assume that at the two ends $z_0$ and $z_1$, the potential is a holomorphic perturbation of a Delaunay potential, and hence regular singular there. In addition, our potentials will have a singularity of rank 1  at $z_\infty$, making it an irregular end. These choices will determine the form of the associated scalar \ODE (\ref{eq:scalar2ndode}).

    Note that in this and the following sections, we omit the dependence on the spectral parameter $\lambda$. Let $\vartheta_0, \vartheta_1\in\C\setminus\tfrac{1}{2}\Z$ be parameters, and $a$ and $b$ functions on $\Sigma$ such that $a$ is holomorphic at $z_0$ and $z_1$, and $b$ is allowed to have simple poles at $z_0$ and $z_1$. Define
    \begin{align*}\label{eq:potdoublepoles}
      \xi&:=\begin{pmatrix} 0 & 1\\ Q(z) & 0\end{pmatrix}\;dz,\\
      Q(z)&:=\frac{\vartheta_0(\vartheta_0-1)}{(z-z_0)^2}+\frac{\vartheta_1(\vartheta_1-1)}{(z-z_1)^2}+b(z).
    \end{align*}
    The points $z_0$ and $z_1$ are regular singular points of $\xi$. These double poles can be gauged to simple poles by
    \begin{equation}\label{eq:gaugepoles}
      g_0:=\begin{pmatrix} 1 & 0\\ G & 1\end{pmatrix},\quad\text{where}\;G(z)=\frac{\vartheta_0}{z-z_0}+\frac{\vartheta_1}{z-z_1}+a(z),
    \end{equation}
    to obtain
    \begin{equation}\label{eq:potregpoints}
      \eta:=\xi.\left(g_0^{-1}\right)=A_0\frac{dz}{z-z_0}+A_1\frac{dz}{z-z_1}+B\;dz
    \end{equation}
    where $B$ is holomorphic at $z_0$ and $z_1$ and
    \begin{equation}\label{eq:matrices}
      A_k:=H_k \Delta_k H_k^{-1},\quad \Delta_k:=\diag(-\vartheta_k,\vartheta_k),\quad H_k:=\begin{pmatrix} 1 & 0\\ * & 1\end{pmatrix},\quad k\in\lbrace 0, 1\rbrace.
    \end{equation}
    It is only left to see that a general potential like in (\ref{eq:0pot}), the one we started with, is equivalent to a potential of the form in (\ref{eq:potregpoints}) by the gauge
    \begin{equation}\label{eq:gaugeupright}
      g_1=\begin{pmatrix} S^{1/2} & 0\\ 0 & S^{-1/2}\end{pmatrix}.
    \end{equation}

    \subsection{The $z^AP$ lemma}
      Suppose $\dd\Phi=\Phi\eta$ for which $\eta=A\frac{dz}{z-z_k}+\bigO(z^0)\;dz$ has a simple pole at $z=z_k$ and Delaunay residue $A$. A standard result in the theory of regular singularities states that under certain conditions on the eigenvalues of $A$, there exists a solution of the form $\Phi=z^AP=\exp\left(A\log{z}\right)P$, where $P$ extends holomorphically to $z=z_k$ (see \cite[Lemma 14]{KilKRS}). \Cref{zap} summarizes these ideas for our context.
      \begin{lemma}\label{zap}
        For $k\in\lbrace 0, 1\rbrace$, the \ODE $\;\dd\Phi=\Phi\xi$ has solutions
        \begin{equation}\label{eq:zapsolutions}
          \Phi_k (z)=\exp\left(\Delta_k\log\left((-1)^k(z-z_k)\right)\right)P_k(z)g_0(z),
        \end{equation}
        at $z_k$, where $P_k(z)$ is holomorphic at $z=z_k$ and $P_k(z_k)=H_k^{-1}$.
      \end{lemma}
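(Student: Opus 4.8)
The plan is to pass to the gauge-equivalent potential $\eta$ of \eqref{eq:potregpoints}, which has only a \emph{simple} pole at $z_k$, apply the standard regular-singularity result quoted above, and then transport the solution back by the gauge $g_0$ together with a constant conjugation. Since the gauge action is a right action and $\eta=\xi.(g_0^{-1})$, we have $\eta.g_0=\xi$; hence if $\Psi_k$ solves $\dd\Psi=\Psi\eta$ near $z_k$, then $\Phi_k:=\Psi_k\,g_0$ solves $\dd\Phi=\Phi\xi$. It therefore suffices to produce a solution of $\dd\Psi=\Psi\eta$ of the form $\Psi_k=\exp\!\big(\Delta_k\log w_k\big)P_k$, with $w_k:=(-1)^k(z-z_k)$, where $P_k$ is holomorphic at $z_k$ and $P_k(z_k)=H_k^{-1}$; multiplying on the right by $g_0$ then reproduces \eqref{eq:zapsolutions} verbatim. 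The constant factor $(-1)^k$ is harmless, since in the coordinate $w_k$ one has $\dd z/(z-z_k)=\dd w_k/w_k$ and the residue is unaffected.

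Next I would invoke \cite[Lemma 14]{KilKRS}. By \eqref{eq:potregpoints} we may write $\eta=A_k\,\tfrac{\dd z}{z-z_k}+\bigO(z^0)\,\dd z$ near $z_k$, the remaining simple-pole term being holomorphic there; in the coordinate $w_k$ the residue is still $A_k$. The cited lemma requires a non-resonance condition on the eigenvalues of $A_k$, which by \eqref{eq:matrices} are $\pm\vartheta_k$. Their differences $\pm 2\vartheta_k$ avoid $\Z$ because $\vartheta_k\notin\tfrac12\Z$, so no two eigenvalues differ by a nonzero integer and the hypothesis holds. The lemma then yields a solution $\Psi_k^{\mathrm{std}}=\exp\!\big(A_k\log w_k\big)\widetilde P_k$ with $\widetilde P_k$ holomorphic at $z_k$ and $\widetilde P_k(z_k)=\identity$.

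Finally I would diagonalize and normalize. From $A_k=H_k\Delta_k H_k^{-1}$ we get $\exp(A_k\log w_k)=H_k\exp(\Delta_k\log w_k)H_k^{-1}$, and since left multiplication by a constant invertible matrix preserves the solution space of $\dd\Psi=\Psi\eta$, the matrix $\Psi_k:=H_k^{-1}\Psi_k^{\mathrm{std}}=\exp(\Delta_k\log w_k)\big(H_k^{-1}\widetilde P_k\big)$ is again a solution. Setting $P_k:=H_k^{-1}\widetilde P_k$, which is holomorphic with $P_k(z_k)=H_k^{-1}$, and $\Phi_k:=\Psi_k g_0$ finishes the proof. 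The only genuine analytic content, namely the convergence of the recursive power series defining $\widetilde P_k$, is packaged inside the quoted lemma, so the sole substantive hypothesis to discharge is the non-resonance $2\vartheta_k\notin\Z$. Everything else is bookkeeping with constant matrices, and the main pitfall I would watch for is keeping the left/right gauge conventions straight, so that the diagonal $\Delta_k$ rather than $A_k$ ends up in the exponent with the required normalization $P_k(z_k)=H_k^{-1}$.
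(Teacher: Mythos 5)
Your proposal is correct and follows essentially the same route as the paper: gauge $\xi$ to the simple-pole potential $\eta=\xi.(g_0^{-1})$, apply the standard regular-singular-point lemma using the non-resonance $2\vartheta_k\notin\Z$, conjugate $A_k=H_k\Delta_k H_k^{-1}$ via left multiplication by the constant $H_k^{-1}$, and gauge back by $g_0$. The only (cosmetic) difference is your handling of the factor $(-1)^k$: you absorb it from the start by working in the coordinate $w_k=(-1)^k(z-z_k)$, whereas the paper first produces the solution with $\log(z-z_k)$ and then, for $k=1$, left-multiplies by the constant matrix $\diag\left((-1)^{-\vartheta_1},(-1)^{\vartheta_1}\right)$ --- both devices are equivalent since $\dd z/(z-z_k)=\dd w_k/w_k$ and left multiplication by constants preserves the solution space.
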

      \begin{proof}
        The potential $\eta$ has a simple pole at $z_k$ with residue $A_k$. Since $\vartheta_k\notin\frac{1}{2}\Z\out 0$, by the theory of regular singular points, there exists a solution to the \ODE $\;d\Psi=\Psi\eta$ of the form
        \begin{equation*}
          \Psi_k(z)=\exp\left(A_k\log\left(z-z_k\right)\right)Q_k(z),
        \end{equation*}
        where $Q_k(z)$ is holomorphic at $z=z_k$ and $Q_k(z_k)=\identity$. Since $\xi=\eta.g_0$, then
        \begin{equation*}
          \hat{\Phi}_k(z)=H_k^{-1}\exp\left(A_k\log\left(z-z_k\right)\right)Q_k(z)g_0(z),
        \end{equation*}
        is a solution to the \ODE $\dd\Phi=\Phi\xi$. Since $A_kH_k=H_k\Delta_k$, then
        \begin{equation*}
          \hat{\Phi}_k(z)=\exp\left(\Delta_k\log\left(z-z_k\right)\right)H_k^{-1}Q_k(z)g_0(z).
        \end{equation*}
        The theorem follows with $P_k:=H_k^{-1}Q_k$, $\Phi_0:=\hat{\Phi}_0$ and $\Phi_1:=\diag\left((-1)^{-\vartheta_1},(-1)^{\vartheta_1}\right)\hat{\Phi}_1$.
      \end{proof}
      %
    %
  %

  \section{The Confluent Heun Equation}\label{sec:che}
    
    We have illustrated in sections \ref{sec:associatedode} and \ref{sec:regularpoints} why we want to prescribe two regular singularities and  one irregular singularity in our potential and, consequently, in the scalar \ODE (\ref{eq:scalar2ndode}) associated to the initial value problem (\ref{eq:IVP}). Let us do this now explicitly.

    The simplest second order \ODE with two regular singularities and one irregular singularity is the confluent Heun equation (CHE), which in its \emph{non-symmetrical canonical form} is written as
    \begin{equation}\label{eq:standardCHE}
      y''+\left(4p+\frac{\gamma}{z}+\frac{\delta}{z-1}\right)y'+\frac{4p\alpha z-\sigma}{z(z-1)}y=0
    \end{equation}
    where the parameters $p, \gamma, \delta, \alpha, \sigma\in\C$. This equation arises as a result of the confluence of two regular singular points in the Heun equation \cite{Heun}, and counts the points $z_0=0$ and $z_1=1$ as regular singularities and $z_\infty=\infty$ as an irregular singular point of rank 1. For the purpose of this work we are going to consider the CHE written as in \cite{ScSc}, that is
    \begin{align}\label{eq:CHE}
      \begin{split}
        y''&+\left(2a+\frac{1-\mu_0}{z}+\frac{1-\mu_1}{z-1}\right)y'\\
        &+\left(\frac{z[a(2-\mu_0-\mu_1)-(r_0+r_1)]+\frac{1}{2}(\mu_0\mu_1-2a(1-\mu_0)-(\mu_0+\mu_1)+2r_0+1)}{z(z-1)}\right)y=0,
      \end{split}
    \end{align}
    The parameters $\mu_0, \mu_1, a, r_0, r_1$ are again complex, but we will need to restrict our choices later on.
    
    Consider again the off-diagonal potential in (\ref{eq:offdpot}) with functions $\nu:=e^{2 f}R$ and $\rho:=e^{-2 f}T$, where $f=\int S(w)\;dw$, obtained in (\ref{eq:1pot}). Plugging $\nu$ and $\rho$ into \cref{eq:scalar2ndode}, it is easy to find expressions for the functions $R$, $S$ and $T$. In particular, one way to prescribe the CHE (\ref{eq:CHE}) in the potential is with the following relations:
    \begin{align}\label{eq:RST}
      R(z)&:=(z-1)^{\mu_1-1}z^{\mu_0-1},\nonumber\\
      S(z)&:=-a,\\
      T(z)&:=-z^{-\mu_0}(z-1)^{-\mu_1}[z(a(2-\mu_0-\mu_1)-(r_0+r_1))\nonumber\\
      &+\frac{1}{2}(\mu_0\mu_1-2a(1-\mu_0)-(\mu_0+\mu_1)+2r_0+1)].\nonumber
    \end{align}
    Using these functions in the potential (\ref{eq:1pot}) and doing the gauge considered in \cref{sec:associatedode} we end up with an off-diagonal potential which has associated (\ref{eq:CHE}) as scalar \ODE. Also, the correspondence with the parameters and functions used in the gauge in \cref{sec:regularpoints} is as follows: set $z_0=0$ and $z_1=1$ and for $k\in\lbrace 0,1\rbrace$ let
    \begin{align}\label{eq:abthetas}
      \vartheta_k&:=\frac{1-\mu_k}{2},\nonumber\\
      a(z)&:=a,\\
      b(z)&:=\frac{r_0}{z}+\frac{r_1}{z-1}+a^2.\nonumber
    \end{align}
    Sch\"afke and Schmidt give in \cite{ScSc} an asymptotic formula for the connection coefficients between a set of two solutions of \cref{eq:CHE} around $z=0$ and another set of two solutions of \cref{eq:CHE} around $z=1$, in terms of their series expansion coefficients. From the two equations appearing in Proposition 2.14 in \cite{ScSc}, we only need the first one in order to write down our matrix relationship $C=\Phi_0\Phi_1^{-1}$, as in what follows we will only consider the unique solution at $z=0$. These connection coefficients can be written in a matrix form allowing us to define the \emph{connection matrix} between two solutions $\Phi_0$ and $\Phi_1$. Let
    \begin{equation}\label{eq:seriessol}
      y_0(z)=\Gamma(1-\mu_0)\sum_{k=0}^\infty c_k z^k
    \end{equation}
    be the series expansion of the unique solution to \cref{eq:CHE} which is holomorphic at $z=0$ and satisfies $y_0(0)=1$.
    \begin{theorem}\cite{ScSc}\label{connection}
      The connection matrix $C:=\Phi_0\Phi_1^{-1}$ is
      \begin{equation}\label{eq:C}
        C=\begin{pmatrix} \frac{\Gamma(\mu_0)}{\Gamma(1-\mu_0)} & 0 \\ 0 & 1\end{pmatrix}\begin{pmatrix*}[r] q(\mu_0,-\mu_1) & q(\mu_0,\mu_1)\\ q(-\mu_0,-\mu_1) & q(-\mu_0,\mu_1) \end{pmatrix*}\begin{pmatrix} 1 & 0 \\ 0 & \frac{\Gamma(\mu_1)}{\Gamma(1-\mu_1)}\end{pmatrix},
      \end{equation}
      where the asymptotic formula by which $q$ can be calculated explicitly is
      \begin{equation}\label{eq:q}
        q(\mu_0,\mu_1)=\Gamma(1-\mu_0)\Gamma(1-\mu_1)\lim_{k\to\infty}\frac{\Gamma(k+1)}{\Gamma(k-\mu_1)}c_k.
      \end{equation}
    \end{theorem}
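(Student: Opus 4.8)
The plan is to use that $\Phi_0$ and $\Phi_1$ are both fundamental matrix solutions of the \emph{same} first order linear system $\dd\Phi=\Phi\eta$, so that $C=\Phi_0\Phi_1^{-1}$ is automatically a constant matrix; determining it is equivalent to solving the scalar connection problem for \cref{eq:CHE}. First I would record the local data: the indicial equation of \cref{eq:CHE} at $z=0$ is $\rho(\rho-\mu_0)=0$ and at $z=1$ is $\rho(\rho-\mu_1)=0$, so each regular singular point carries a Frobenius basis with exponents $0$ and $\mu_k$. Writing the matrix solutions in the shape of \cref{eq:generalsolution} provided by \cref{associatedode}, the four entries of $C$ are precisely the four scalar connection coefficients expressing each of the two solutions at $z=0$ in the Frobenius basis at $z=1$.

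The core of the argument is the single coefficient $q(\mu_0,\mu_1)$, which I would extract by a singularity analysis of the holomorphic solution $y_0$ of \cref{eq:seriessol}. Its Taylor series converges in $|z|<1$, the radius being the distance to the dominant singularity at $z=1$. Decomposing $y_0$ near $z=1$ in the local Frobenius basis as $y_0=A\,w_0+B\,w_1$, where $w_0$ is holomorphic and $w_1=(1-z)^{\mu_1}h(z)$ with $h$ holomorphic and $h(1)\neq0$, only the non-holomorphic branch contributes to the leading coefficient asymptotics. Since $[z^k](1-z)^{\mu_1}=\Gamma(k-\mu_1)/(\Gamma(-\mu_1)\Gamma(k+1))$, a transfer theorem of Darboux--Flajolet--Odlyzko type gives $c_k\sim \mathrm{const}\cdot B\,\Gamma(k-\mu_1)/\Gamma(k+1)$, so that $\lim_{k\to\infty}\Gamma(k+1)\,c_k/\Gamma(k-\mu_1)$ exists and is a fixed multiple of the connection coefficient $B$. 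After restoring the normalization of $y_0$ this is exactly \cref{eq:q}.

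To obtain the remaining three entries together with the diagonal $\Gamma$-factors, I would exploit the two exponent-shifting symmetries of \cref{eq:CHE}: the substitution $y=z^{\mu_0}\tilde y$ transforms the equation into a CHE with $\mu_0\mapsto-\mu_0$ and interchanges the two Frobenius solutions at $z=0$, while $y=(1-z)^{\mu_1}\tilde y$ flips $\mu_1\mapsto-\mu_1$ at $z=1$. Running the single computation above through the four sign choices reproduces $q(\pm\mu_0,\pm\mu_1)$ in the pattern of the central matrix of \cref{eq:C}, and the prefactors absorbed when renormalizing each Frobenius solution to the chosen exponent produce the outer diagonal factors $\Gamma(\mu_0)/\Gamma(1-\mu_0)$ and $\Gamma(\mu_1)/\Gamma(1-\mu_1)$.

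The main obstacle is the analytic justification together with the bookkeeping, rather than any single hard estimate. Analytically one must continue $y_0$ into a slit ($\Delta$-shaped) neighbourhood of $z=1$ and bound the contributions of the subdominant holomorphic part $A\,w_0$ and of the higher-order terms of $w_1$, so that the stated limit is genuinely governed by $B$ alone; this is where the hypothesis $\mu_k\notin\Z$ and the finiteness of the limit in \cref{eq:q} are really used. Combinatorially, the delicate point is to track the branch choices of $z^{\mu_0}$, $(1-z)^{\mu_1}$ and of the Frobenius normalizations across all four sign combinations so that the resulting $\Gamma$-quotients assemble into exactly the left and right diagonal matrices of \cref{eq:C} with the correct signs --- it is here, and not in the limit itself, that most of the care is required.
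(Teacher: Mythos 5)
Your proposal is correct in substance, but it takes a genuinely different route, because the paper does not really prove this theorem at all: its proof is a two-line change of notation. The paper sets $\tilde\Phi_k=G^{-1}D_k^{-1}\Phi_k g_0^{-1}$ with $D_k=\diag\left(\Gamma(\mu_k),(-1)^k\Gamma(1-\mu_k)\right)$, quotes \cite[Proposition~2.14, Theorem~2.15]{ScSc} for the connection matrix $\tilde C$ and the limit $\tilde q$ in \cref{eq:CScSc}--\cref{eq:qScSc}, and recovers \cref{eq:C} and \cref{eq:q} from $C=D_0\tilde C D_1^{-1}$ and $q=\Gamma(1-\mu_0)\Gamma(1-\mu_1)\tilde q$. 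You instead re-derive the cited Sch\"afke--Schmidt result itself: continuation of $y_0$ into a slit domain at $z=1$ (legitimate, since the only finite singularities of \cref{eq:CHE} are $0$ and $1$), the splitting $y_0=A\,w_0+B\,(1-z)^{\mu_1}h(z)$, the coefficient identity $[z^k](1-z)^{\mu_1}=\Gamma(k-\mu_1)/\left(\Gamma(-\mu_1)\Gamma(k+1)\right)$, and a Flajolet--Odlyzko transfer showing $\Gamma(k+1)c_k/\Gamma(k-\mu_1)$ converges to a fixed multiple of $B$ (subtracting finitely many terms $B\,h_j(1-z)^{\mu_1+j}$, each analytic on the slit plane, plus a Taylor polynomial of the regular part makes the limit genuinely exist, even when $B=0$); the hypothesis $\mu_k\notin\Z$, i.e.\ $\vartheta_k\notin\tfrac12\Z$, enters exactly where you say. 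Your key symmetry also checks out: substituting $y=z^{\mu_0}\tilde y$ maps \cref{eq:CHE} with parameters $(\mu_0,\mu_1,r_0,r_1,a)$ precisely to the CHE with $(-\mu_0,\mu_1,r_0,r_1,a)$ --- the new double pole at $z=0$ cancels and the accessory parameters are unchanged --- which is consistent with the paper's later use of the four tuples $(\pm\mu_0,\pm\mu_1,r_0,r_1,a)$ in \Cref{5tuple}. What each approach buys: yours is self-contained and explains where the limit \cref{eq:q} comes from, which also makes the sign argument of \Cref{signsrecursion} transparent; the paper's is far shorter and outsources to \cite{ScSc} the one truly fiddly part, namely fixing branches and normalizations (the $(-1)^k$ in $D_k$, the prefactor $\diag\left((-1)^{-\vartheta_1},(-1)^{\vartheta_1}\right)$ in \Cref{zap}) so that the $\Gamma$-quotients land exactly in the outer diagonal matrices of \cref{eq:C}. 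You correctly identify that bookkeeping as the locus of care but leave it unexecuted; since the constants in \cref{eq:C} are only pinned down by the normalization of $\Phi_0,\Phi_1$ in \Cref{zap}, completing your route would require carrying those constants through explicitly --- residual work, but not a conceptual gap.
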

    \begin{proof}
      The proof amounts to converting the notation of Sch\"afke-Schmidt to our notation. Define
      \begin{equation}\label{eq:notationScSc}
        \tilde{\Phi}_k=G^{-1}D_k^{-1}\Phi_kg_0^{-1},\quad D_k:=\diag\left(\Gamma(\mu_k),(-1)^k\Gamma(1-\mu_k)\right),\quad k\in\lbrace 0,1\rbrace.
      \end{equation}
      By \cite[Proposition 2.14, Theorem 2.15]{ScSc}, the connection matrix $\tilde{C}=\tilde{\Phi}_0\tilde{\Phi}_1^{-1}\in\GL\C$ is
      \begin{equation}\label{eq:CScSc}
        \tilde{C}=\Gamma(\mu_1)\Gamma(1-\mu_1)\begin{pmatrix*}[r] \tilde{q}(\mu_0,-\mu_1) & -\tilde{q}(\mu_0,\mu_1)\\ \tilde{q}(-\mu_0,-\mu_1) & -\tilde{q}(-\mu_0,\mu_1) \end{pmatrix*},
      \end{equation}
      where
      \begin{equation}\label{eq:qScSc}
        \tilde{q}(\mu_0,\mu_1)=\lim_{k\to\infty}\frac{\Gamma(k+1)}{\Gamma(k-\mu_1)}c_k.
      \end{equation}
      The theorem follows by the relations between our notation and that of \cite{ScSc}:
      \begin{equation}\label{eq:relationsnotation}
        q(\mu_0,\mu_1)=\Gamma(1-\mu_0)\Gamma(1-\mu_1)\tilde{q}(\mu_0,\mu_1)\quad\text{and}\quad C=D_0\tilde{C}D_1^{-1}.
      \end{equation}
    \end{proof}
    By standard methods (see \cite[Part B, 2.2]{Heun}), the sequence $\lbrace c_k\rbrace$ of coefficients defined by \cref{eq:seriessol} satisfy the 3-term recurrence
    \begin{equation}\label{eq:recurrence}
      U(k)c_{k+1}=V(k)c_k+W(k)c_{k-1},\quad c_{-1}=0,\; c_0=1
    \end{equation}
    where
    \begin{align}\label{eq:polynomials}
      U(k)&:=(1+k)(1+k-\mu_0),\nonumber\\
      V(k)&:=k(k+1-2a-\mu_0-\mu_1)+\frac{1}{2}(\mu_0-1)(\mu_1-1)+a(\mu_0-1)+r_0,\\
      W(k)&:=a(2k-\mu_0-\mu_1)-(r_0+r_1).\nonumber
    \end{align}
    The recurrence in (\ref{eq:recurrence}) and its polynomials in (\ref{eq:polynomials}) will be used to obtain unitarisability in \cref{sec:unitarisablitymonodromy}.
  %

  \section{Unitarisability of the Monodromy}\label{sec:unitarisablitymonodromy}

    The proof of the next proposition is deferred to the appendix.
    \begin{proposition}\label{unit2mat}
      Let $M_0,M_1\in\SL(\C)\out{\pm\identity}$ be irreducible and individually unitarisable. Let $\varphi, \varphi'\in\C P^1$ and $\psi, \psi'\in\C P^1$ be the respective eigenlines of $M_0$ and $M_1$. Then $M_0$ and $M_1$ are simultaneously unitarisable if and only if the cross-ratio 
      \begin{equation}\label{eq:negcrossratio}
        \left[\varphi,\psi,\varphi',\psi'\right]\in\R_-.
      \end{equation}
    \end{proposition}
    In our context, the two matrices to be unitarised are of the form $\Delta_0$ and $C\Delta_1C^{-1}$, where $\Delta_0$ and $\Delta_1$ are diagonal. The unitarisability criterion in \Cref{unit2mat} for this case can be expressed as follows.
    \begin{proposition}\label{unitM0M1}
      Consider two matrices $M_0:=\Delta_0$ and $M_1:=C\Delta_1C^{-1}$, where $\Delta_0,\Delta_1\in\SL(\C)\out{\pm\identity}$ are diagonal matrices and
      \begin{equation}\label{eq:Cunit}
        C=:\begin{pmatrix} a & b\\ c & d \end{pmatrix}\in\SL(\C).
      \end{equation}
      \begin{enumerate}[label=(\roman*)]
        \item\label{i} $M_0$ and $M_1$ are irreducible if and only if $a, b, c$ and $d$ are non-zero.
        \item\label{ii} If $M_0$ and $M_1$ are irreducible and individually unitarisable, then $M_0$ and $M_1$ are simultaneously unitarisable if and only if the ratio $\frac{bc}{ad}\in\R_{-}$.
      \end{enumerate}
    \end{proposition}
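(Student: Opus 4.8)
My plan is to reduce both parts of the proposition to the criterion in \Cref{unit2mat} by computing the relevant eigenlines explicitly in terms of the entries of $C$, and then translating the cross-ratio condition into the stated ratio $\tfrac{bc}{ad}$. Throughout I will use that $\Delta_0$ and $\Delta_1$ are diagonal and non-central, so each has exactly two distinct eigenvalues with eigenlines given by the coordinate axes.

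For part \ref{i}, I would first note that $M_0=\Delta_0$ has eigenlines $\varphi=\left[1:0\right]$ and $\varphi'=\left[0:1\right]$, the two coordinate lines. The eigenlines $\psi,\psi'$ of $M_1=C\Delta_1C^{-1}$ are the images under $C$ of the coordinate lines, i.e. the lines spanned by the columns of $C$, namely $\psi=\left[a:c\right]$ and $\psi'=\left[b:d\right]$. Irreducibility of the pair means the two eigenlines of $M_0$ share no common eigenline with $M_1$; equivalently, none of the four lines $\varphi,\varphi',\psi,\psi'$ coincides. The line $\psi=\left[a:c\right]$ equals $\varphi=\left[1:0\right]$ iff $c=0$, equals $\varphi'=\left[0:1\right]$ iff $a=0$, and similarly $\psi'=\left[b:d\right]$ coincides with a coordinate axis iff $d=0$ or $b=0$. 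Hence the pair is reducible exactly when one of $a,b,c,d$ vanishes, which gives \ref{i}.

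For part \ref{ii}, assuming irreducibility and individual unitarisability, I would apply \Cref{unit2mat}: the pair is simultaneously unitarisable iff the cross-ratio $\left[\varphi,\psi,\varphi',\psi'\right]$ lies in $\R_-$. Substituting the four lines computed above into the cross-ratio and evaluating in the affine chart should collapse the expression to a ratio of the entries of $C$. With the ordering $\varphi=\left[1:0\right]$, $\psi=\left[a:c\right]$, $\varphi'=\left[0:1\right]$, $\psi'=\left[b:d\right]$, a direct computation of the cross-ratio yields $\tfrac{bc}{ad}$, so the condition $\left[\varphi,\psi,\varphi',\psi'\right]\in\R_-$ becomes exactly $\tfrac{bc}{ad}\in\R_-$, which is \ref{ii}. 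Since $a,b,c,d$ are all non-zero by \ref{i}, this ratio is a well-defined non-zero complex number.

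The main obstacle I anticipate is bookkeeping rather than conceptual: the cross-ratio has four conventional normalisations and several argument orderings, and I must match the ordering of eigenlines to the convention fixed in \Cref{unit2mat} so that the output is $\tfrac{bc}{ad}$ rather than its reciprocal or its negative. Getting the right representative in an affine chart and confirming the sign is real-negative under the intended convention is the delicate step; once the cross-ratio simplifies to $\tfrac{bc}{ad}$, both assertions follow immediately.
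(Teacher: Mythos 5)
Your proposal is correct and follows essentially the same route as the paper: the eigenlines of $M_0$ are the coordinate axes, the eigenlines of $M_1$ are the columns of $C$, and \Cref{unit2mat} converts the cross-ratio condition into $\frac{bc}{ad}\in\R_-$ (for part (i) the paper instead computes $M_1=C\Delta_1C^{-1}$ explicitly and checks when it is triangular, which is equivalent to your common-eigenline argument). One bookkeeping note: with your labelling $\varphi=[1{:}0]=\infty$, $\varphi'=[0{:}1]=0$, the cross-ratio $[\varphi,\psi,\varphi',\psi']$ actually evaluates to $\frac{ad}{bc}$, the reciprocal of the stated ratio (the paper takes $\varphi_1=0$, $\varphi_2=\infty$ to get $\frac{bc}{ad}$), but since $\R_-$ is invariant under taking reciprocals this does not affect the criterion, exactly as you anticipated.
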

    \begin{proof}
      For \ref{i}, since $M_0$ is diagonal and not $\pm\identity$, then $M_0$ and $M_1$ are reducible if and only if $M_1$ is upper or lower triangular. Writing $\Delta_1=\diag(\beta,\beta^{-1})$, compute
      \begin{equation}\label{eq:M1}
        M_1=C\Delta_1C^{-1}=\begin{pmatrix} ad\beta-bc\beta^{-1} & -ab(\beta-\beta^{-1})\\ cd(\beta-\beta^{-1}) & ad\beta^{-1}-bc\beta \end{pmatrix}.
      \end{equation}
      Then $M_1$ is upper or lower triangular if and only if at least one of $a,b,c$ or $d$ vanishes.

      To prove \ref{ii}, since $M_0$ is diagonal, its eigenlines in $\C P^1$ are $\varphi_1=0$ and $\varphi_2=\infty$. Since $M_1C=C\Delta_1$, the eigenlines of $M_1$ in $\C^2$ are the columns of $C$, so its eigenlines in $\C P^1$ are $\psi_1=a/c$ and $\psi_2=b/d$. Then
      \begin{equation}\label{eq:creigenlines}
        [\varphi_1,\psi_1,\varphi_2,\psi_2]=\frac{\psi_2}{\psi_1}=\frac{bc}{ad}.
      \end{equation}
      By \Cref{unit2mat}, $M_0$ and $M_1$ are simultaneously unitarisable if and only if this cross ratio is in $\R_{-}$.
    \end{proof}
    We will apply the above criterion to the monodromy of an \ODE as follows. Consider a potential $\xi$ with  singular points $z_0$ and $z_1$. Let $\tau_0$ and $\tau_1$ be the deck transformations corresponding to closed paths around $z_0$ and $z_1$ respectively. Let $\Phi_0$ and $\Phi_1$ be solutions to the \ODE $\;\dd\Phi=\Phi\xi$ chosen so that their respective monodromies
    \begin{equation*}
      \Delta_0:=\Phi_0(\tau_0(z))\Phi_0^{-1}\quad\text{and}\quad\Delta_1:=\Phi_1(\tau_1(z))\Phi_1^{-1}
    \end{equation*}
    are diagonal. Let $C=\Phi_0\Phi_1^{-1}$ be the connection matrix between these two solutions with entries as in (\ref{eq:Cunit}). The monodromies of $\Phi_0$ at $z_0$ and $z_1$ are respectively
    \begin{equation*}
      M_0=\Delta_0\quad\text{and}\quad M_1=\Phi_0(\tau_1(z))\Phi_0^{-1}=C\Phi_1(\tau_1(z))\Phi_1^{-1}C^{-1}=C\Delta_1C^{-1}.
    \end{equation*}
    Suppose $M_0$ and $M_1$ are irreducible and individually unitarisable. By \Cref{unitM0M1}, $M_0$ and $M_1$ are simultaneously unitarisable if and only if $\frac{bc}{ad}\in\R_{-}$.
    For the remainder of the paper we choose all parameters in the CHE to be real. Also, we assume $a>0$. Is easy to check that the coefficients $U(k)$, $V(k)$ and $W(k)$ of the CHE recurrence (\ref{eq:recurrence}) are positive for all sufficient large $k$. Under this assumption, the signs of the entries of the connection matrix $C$ in (\ref{eq:C}) can be computed as follows.
    \begin{proposition}\label{signsrecursion}
      Suppose $\mu_j<1$ for $j\in\lbrace 0,1\rbrace$ and that there exists $k_0\in\N$ such that $U(k)>0$, $V(k)>0$ and $W(k)>0$ for all $k\geq k_0$. If $c_{k_0-1}>0$ and $c_{k_0}>0$, then $q$ defined in \cref{eq:q} satisfies $q\geq 0$. Similarly, if $c_{k_0-1}<0$ and $c_{k_0}<0$, then $q\leq 0$.
    \end{proposition}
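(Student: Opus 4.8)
The plan is to show that the hypotheses force the tail of $\{c_k\}$ to have a constant sign, and then to check that every factor multiplying $c_k$ in the defining limit \cref{eq:q} is positive for large $k$, so that the sign of $q$ is inherited directly from that of the tail. First I would argue by induction that $c_k>0$ for all $k\geq k_0-1$ in the first case. The base cases $c_{k_0-1}>0$ and $c_{k_0}>0$ are given, and for the inductive step I rewrite the recurrence \cref{eq:recurrence} at index $k\geq k_0$ as
\begin{equation*}
  c_{k+1}=\frac{V(k)\,c_k+W(k)\,c_{k-1}}{U(k)}.
\end{equation*}
By hypothesis $U(k),V(k),W(k)>0$ for $k\geq k_0$, and by the inductive hypothesis $c_k,c_{k-1}>0$, so the numerator and denominator are both positive, whence $c_{k+1}>0$; thus the whole tail $(c_k)_{k\geq k_0-1}$ is positive. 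The second case is identical: if $c_{k_0-1}<0$ and $c_{k_0}<0$, then $V(k)\,c_k+W(k)\,c_{k-1}<0$ for $k\geq k_0$, forcing $c_{k+1}<0$, so the tail is negative.

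Next I would record the signs of the remaining factors in \cref{eq:q}. Since $\mu_0<1$ and $\mu_1<1$, we have $1-\mu_0>0$ and $1-\mu_1>0$, and because the Gamma function is strictly positive on the positive real axis, $\Gamma(1-\mu_0)>0$ and $\Gamma(1-\mu_1)>0$. Moreover, once $k>\mu_1$ both $k+1>0$ and $k-\mu_1>0$, so $\Gamma(k+1)>0$ and $\Gamma(k-\mu_1)>0$, and hence the quotient $\Gamma(k+1)/\Gamma(k-\mu_1)$ is positive. Consequently, for all sufficiently large $k$ the term $\Gamma(1-\mu_0)\Gamma(1-\mu_1)\,\Gamma(k+1)\,c_k/\Gamma(k-\mu_1)$ has the same sign as $c_k$.

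Finally I would conclude by passing to the limit. In the first case every term of the sequence whose limit defines $q$ is positive from some index on, so the limit --- which exists by \Cref{connection} --- is nonnegative, giving $q\geq 0$; in the second case the terms are eventually negative, so the limit is nonpositive and $q\leq 0$. The argument is little more than a sign-tracking induction, so I do not expect a genuine obstacle; the only point demanding care is confirming that all three Gamma factors keep a fixed positive sign, which is precisely what the restriction $\mu_j<1$ together with $k\to\infty$ guarantees.
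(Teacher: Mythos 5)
Your proposal is correct and follows essentially the same route as the paper's own proof: a sign-tracking induction on the recurrence \cref{eq:recurrence} using the positivity of $U(k)$, $V(k)$, $W(k)$ for $k\geq k_0$, combined with the positivity of the Gamma factors in \cref{eq:q} guaranteed by $\mu_j<1$, and then passing the eventual sign of $c_k$ to the limit. Your remark that the limit exists by \Cref{connection} is a small point of care the paper leaves implicit, but otherwise the two arguments coincide.
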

    \begin{proof}
      Since $\mu_j<1$ for $j\in\lbrace 0,1\rbrace$ and the function $\Gamma(x)>0$ for all $x>0$, we have that $\Gamma(1-\mu_j)>0$ for $j\in\lbrace 0,1\rbrace$, $\Gamma(k+1)>0$ and $\Gamma(k-\mu_1)>0$ for all $k\geq k_0$.

      By hypothesis $U(k)$, $V(k)$ and $W(k)$ are positive for all $k\geq k_0$. Hence, in the case of $c_{k_0-1}>0$ and $c_{k_0}>0$, the third term $c_{k_0+1}$ must be positive as well. Then, by induction, $\lbrace c_k\rbrace_{k=k_0-1}^\infty$ are all positive coefficients. This implies that $q\geq 0$.

      Similarly, if $c_{k_0-1}<0$ and $c_{k_0}<0$ the coefficients $\lbrace c_k\rbrace_{k=k_0-1}^\infty$ are negative and therefore $q\leq 0$.
    \end{proof}
    The criterion for unitarisability in \Cref{unitM0M1}, the asymptotic formula for the connection matrix in \Cref{connection}, and the recurrence relation for the CHE solution in \cref{eq:recurrence} yield the following sufficient condition for the unitarisability of the monodromy. Write the parameters for the CHE as a 5-tuple $\chi:=(\mu_0,\mu_1,r_0,r_1,a)\in\R^5$. Define the finite integer
    \begin{equation}\label{eq:m}
      m(\chi):=\min_{k_0\in\N}\lbrace U(k,\chi)>0, V(k,\chi)>0\;\text{and}\;W(k,\chi)>0\;\text{for all}\;k\geq k_0\rbrace,
    \end{equation}
    and the sets
    \begin{align}\label{eq:SplusSminus}
      \mathcal{S}_+:=\lbrace \chi\in\R^5\mid\text{there exists}\;\ell\geq m(\chi)\;\text{such that}\;c_{\ell-1}>0\;\text{and}\;c_{\ell}>0\rbrace,\\
      \mathcal{S}_-:=\lbrace \chi\in\R^5\mid\text{there exists}\;\ell\geq m(\chi)\;\text{such that}\;c_{\ell-1}<0\;\text{and}\;c_{\ell}<0\rbrace.\nonumber
    \end{align}
    \begin{proposition}\label{5tuple}
      If each of the four 5-tuples $(\pm\mu_0,\pm\mu_1,r_0,r_1,a)\in\R^5$ lies in $\mathcal{S}_+\cup\mathcal{S}_-$, then the monodromy is unitarisable if and only if an odd number of these tuples lie in $\mathcal{S}_+$.
    \end{proposition}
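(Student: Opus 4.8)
The plan is to reduce the claim to a sign count via the unitarisability criterion of \Cref{unitM0M1}. Under the standing assumptions (all CHE parameters real, $a>0$, and $M_0,M_1$ irreducible and individually unitarisable), part \ref{ii} of that proposition asserts that $M_0=\Delta_0$ and $M_1=C\Delta_1C^{-1}$ are simultaneously unitarisable precisely when $bc/ad\in\R_-$, where $a,b,c,d$ are the entries of the connection matrix $C$ in \cref{eq:Cunit}. So the first step is to read off these entries from the explicit form of $C$ in \Cref{connection}. Multiplying out the three factors in \cref{eq:C}, the diagonal prefactors $\Gamma(\mu_0)/\Gamma(1-\mu_0)$ and $\Gamma(\mu_1)/\Gamma(1-\mu_1)$ attach to $a,b$ and to $b,d$ respectively; forming the combination $bc/ad$ makes every one of these Gamma prefactors cancel, leaving the clean expression
\[
  \frac{bc}{ad}=\frac{q(\mu_0,\mu_1)\,q(-\mu_0,-\mu_1)}{q(\mu_0,-\mu_1)\,q(-\mu_0,\mu_1)}.
\]
Since all parameters are real, this is a ratio of real numbers, so its sign is the product of the signs of the four factors $q(\pm\mu_0,\pm\mu_1)$.

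The second and decisive step is to identify, for each choice of signs $(\epsilon_0,\epsilon_1)\in\{+,-\}^2$, the factor $q(\epsilon_0\mu_0,\epsilon_1\mu_1)$ with the quantity $q$ of \cref{eq:q} built from the CHE whose parameter tuple is $(\epsilon_0\mu_0,\epsilon_1\mu_1,r_0,r_1,a)$. This is exactly the content carried by the recurrence data: the coefficients $c_k$ and the polynomials $U,V,W$ of \cref{eq:polynomials} depend on the full tuple $\chi$, and flipping the sign of $\mu_0$ or $\mu_1$ produces the recurrence, hence the coefficient sequence, attached to the corresponding sign-flipped tuple. Granting this identification, I would apply \Cref{signsrecursion} to each of the four tuples: if $(\epsilon_0\mu_0,\epsilon_1\mu_1,r_0,r_1,a)\in\mathcal{S}_+$ then the tail of its coefficient sequence is eventually strictly positive and $q(\epsilon_0\mu_0,\epsilon_1\mu_1)\ge 0$, while if it lies in $\mathcal{S}_-$ the tail is eventually strictly negative and $q(\epsilon_0\mu_0,\epsilon_1\mu_1)\le 0$. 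Note that $\mathcal{S}_+$ and $\mathcal{S}_-$ are disjoint, since once two consecutive coefficients past $m(\chi)$ share a sign the induction in the proof of \Cref{signsrecursion} forces that sign on the whole tail; so each of the four tuples contributes a well-defined sign, and in the irreducible case (where $a,b,c,d$, hence all four $q$-values, are non-zero) these signs are strict.

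The final step is a parity count. Writing $s(\epsilon_0,\epsilon_1)=+1$ if the tuple lies in $\mathcal{S}_+$ and $-1$ if it lies in $\mathcal{S}_-$, the displayed expression gives $\sign(bc/ad)=\prod_{\epsilon_0,\epsilon_1}s(\epsilon_0,\epsilon_1)=(-1)^{N_-}$, where $N_-$ is the number of the four tuples lying in $\mathcal{S}_-$. Hence $bc/ad\in\R_-$ iff $N_-$ is odd. Since the four tuples partition into those in $\mathcal{S}_+$ and those in $\mathcal{S}_-$, we have $N_++N_-=4$ with $N_+$ the number in $\mathcal{S}_+$; as $4$ is even, $N_-$ is odd exactly when $N_+$ is odd. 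Combining with \Cref{unitM0M1}\ref{ii} yields the stated equivalence: the monodromy is unitarisable iff an odd number of the four tuples lie in $\mathcal{S}_+$.

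The main obstacle I anticipate is the identification in the second step --- establishing cleanly that the entry $q(\epsilon_0\mu_0,\epsilon_1\mu_1)$ of $C$ is computed from the coefficient sequence of the sign-flipped CHE rather than from the original one (a subtle point, since a naive reading with a single fixed sequence $\{c_k\}$ would collapse the ratio $bc/ad$ to $1$). This rests on the symmetry of the CHE under interchange of the two local exponents at each regular singular point, implicit in the Sch\"afke--Schmidt normalisation of \Cref{connection}. Secondary bookkeeping is to confirm the hypotheses of \Cref{signsrecursion} hold for all four tuples simultaneously --- in particular that $|\mu_0|<1$ and $|\mu_1|<1$ so that $\epsilon_j\mu_j<1$ in every case, and that $m(\chi)$ is finite for each tuple --- and to keep track of the strict-versus-non-strict inequalities so that the biconditional (which needs $bc/ad$ strictly negative and all entries non-zero) is genuine.
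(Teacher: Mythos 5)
Your proposal is correct and follows essentially the same route as the paper's proof: \Cref{signsrecursion} fixes the signs of the four values $q(\pm\mu_0,\pm\mu_1)$, the Gamma prefactors in \cref{eq:C} cancel in the ratio $bc/ad$, and the criterion of \Cref{unitM0M1} (via the remarks following it) converts the resulting parity condition into unitarisability. You in fact supply details the paper leaves implicit --- the disjointness of $\mathcal{S}_+$ and $\mathcal{S}_-$, the strictness of the signs via irreducibility of the monodromy, and the identification of each $q(\pm\mu_0,\pm\mu_1)$ with the coefficient sequence of the correspondingly sign-flipped tuple --- but the argument is the same.
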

    \begin{proof}
      By \Cref{signsrecursion}, if $\chi\in\mathcal{S}_+$ then $q(\chi)\geq 0$, and if $\chi\in\mathcal{S}_-$ then $q(\chi)\leq 0$. An odd number of the tuples lie in $\mathcal{S}_+$ if and only if
      \begin{equation}\label{eq:ratioqs}
        \frac{q(\mu_0,\mu_1)q(-\mu_0,-\mu_1)}{q(\mu_0,-\mu_1)q(-\mu_0,\mu_1)}\in\R_{-},
      \end{equation}
      that is, by the remarks after \Cref{unitM0M1}, if and only if the monodromy is unitarisable.
    \end{proof}
    %
  %

  \section{Construction of New Trinoids}\label{sec:trinoids}

    \subsection{The trinoid potential}\label{sec:potential}
      To construct trinoids we choose the potential
      \begin{equation}\label{eq:trinoidpotential}
        \xi_T=\begin{pmatrix} 0 & \lambda^{-1}\\ \lambda Q_T & 0\end{pmatrix} dz,
      \end{equation}
      where
      \begin{equation}\label{eq:QT}
        Q_T:=t\left(-\frac{w_0}{4z^2}-\frac{w_1}{4(z-1)^2}+\frac{\hat{r}_0}{z}+\frac{\hat{r}_1}{z-1}+p^2\right),\quad t:=-\frac{1}{4}\lambda^{-1}(\lambda-1)^2,
      \end{equation}
      and $w_0, w_1, \hat{r}_0, \hat{r}_1, p\in\R$ are free parameters. The parameters $w_0$ and $w_1$ will be the asymptotic end weights of the Delaunay ends at $0$ and $1$. The parameters $\hat{r}_0$ and $\hat{r}_1$ affect the weight of the irregular end, and $p$ the shape of the trinoid. Note that, defining $\lambda=e^{i\theta}$ with $\theta\in [0,2\pi]$, the value of $t=-\frac{1}{4}\lambda^{-1}(\lambda-1)^2=\sin^2\frac{\theta}{2}\in [0,1]$.
      
      With $\Lambda:=\diag(\lambda^{1/2},\lambda^{-1/2})$, the gauged potential 
      \begin{equation}\label{eq:gaugedpotential}
        \xi_T.(\Lambda^{-1})=\begin{pmatrix} 0 & 1\\ Q_T & 0\end{pmatrix} dz
      \end{equation}
      has the form of the CHE potential defined in sections \ref{sec:regularpoints} and \ref{sec:che}, where the coefficients $(\mu_0,\mu_1,r_0,r_1,a)$ in the CHE equation are related to the parameters $(w_0, w_1, \hat{r}_0, \hat{r}_1, p)$ in the trinoid potential $\xi_T$ by
      \begin{equation}\label{eq:parametersrelations}
        \mu_k=\sqrt{1-w_kt},\quad r_k=\hat{r}_kt,\quad a^2=p^2t,\quad k\in\lbrace 0,1\rbrace.
      \end{equation}
      The monodromy of the trinoid potential is unitarisable along $\Sp^1$ if and only if that of the gauged potential $\xi_T.(\Lambda^{-1})$ is.
    %

      \subsection{Construction of trinoids}\label{sec:constructiontrinoids}

        \begin{theorem}\label{constructtrinoids}
          Let $\xi_T$ be a trinoid potential with unitarisable monodromy on $\Sp^1$ minus a finite set. Let $\Phi$ be a solution of $\;\dd\Phi=\Phi\xi_T$. Then there exists a positive dressing $h$ such that the \CMC immersion induced by $h\Phi$ via the generalized Weierstrass representation on the universal cover descends to the three-punctured sphere. The ends at $z=0$ and $z=1$ are asymptotic to Delaunay surfaces.
        \end{theorem}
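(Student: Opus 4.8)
The plan is to verify, for the dressed solution, the three conditions of the monodromy problem \cref{eq:monodromyproblem} for all three loops, and then to identify the local geometry at the two regular ends with that of a Delaunay surface. Once \cref{eq:monodromyproblem} holds, the Iwasawa factorization $h\Phi=FB$ produces a unitary frame $F$ whose monodromy descends $f=F'F^{-1}$ to the quotient surface.

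First I would dispose of the two closing conditions at $\lambda=1$, which are built into the potential. Since the constant gauge $\Lambda^{-1}$ does not change the monodromy (it is $z$-independent, so the factors cancel in $\Phi(\tau_i z)\Phi(z)^{-1}$), the monodromies of $\xi_T$ coincide with those of $\xi_T.(\Lambda^{-1})=\bigl(\begin{smallmatrix}0&1\\ Q_T&0\end{smallmatrix}\bigr)dz$ in \cref{eq:gaugedpotential}, and the latter depends on $\lambda$ only through the scalar $t$ of \cref{eq:QT}. As $t=\sin^2\tfrac\theta2$ has a double zero at $\lambda=1$, namely $t(1)=0$ and $\partial_\lambda t|_{\lambda=1}=0$, at $\lambda=1$ the potential reduces to the constant nilpotent form $\bigl(\begin{smallmatrix}0&1\\0&0\end{smallmatrix}\bigr)dz$, whose solution is single-valued around each puncture; hence $M_i|_{\lambda=1}=\identity$, the second condition. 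Writing $M_i=\mathcal{M}_i(t(\lambda))$ and differentiating, $\partial_\lambda M_i|_{\lambda=1}=\mathcal{M}_i'(0)\,\partial_\lambda t|_{\lambda=1}=0$, the third condition. Because $M_i|_{\lambda=1}=\identity$ commutes with everything, both conditions are unaffected by the later dressing.

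Next I would produce the dressing. By hypothesis, and through the chain \Cref{connection}, \Cref{unitM0M1} and \Cref{5tuple}, the pair $(M_0,M_1)$ is irreducible and pointwise unitarisable on $\Sp^1$ minus a finite set $\mathcal{E}$. For an irreducible pair the invariant positive Hermitian form is unique up to scale, so it depends real-analytically on $\lambda$ off $\mathcal{E}$ and, by the structure of the potential, extends to a positive loop; its square root furnishes the positive dressing $h$ with $hM_ih^{-1}$ unitary, which is the first condition, and unitarity and the closing conditions for the third loop follow from the relation among the three monodromies \cite{KilKRS}. Thus all of \cref{eq:monodromyproblem} holds on $\Sp^1\setminus\mathcal{E}$. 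Uniqueness of Iwasawa then gives $F(\tau_i z)=(hM_ih^{-1})F(z)$ with $B$ invariant, so $f(\tau_i z)=(hM_ih^{-1})'(hM_ih^{-1})^{-1}+(hM_ih^{-1})\,f\,(hM_ih^{-1})^{-1}$; evaluated at $\lambda=1$ the first term vanishes and the conjugation is by $\identity$, so $f$ is invariant and descends. A removable-singularity argument in $\lambda$ extends the frame across $\mathcal{E}$.

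Finally come the end asymptotics. At $z_0=0$ and $z_1=1$ the gauged potential is a regular singular point of the type treated in \Cref{sec:regularpoints}, a holomorphic perturbation of a Delaunay potential whose residue eigenvalues $\pm\vartheta_k$ and weight $w_k$ are fixed by \cref{eq:parametersrelations}. Using the $z^AP$ normal form of \Cref{zap}, I would compare the dressed frame near each puncture with the frame of the model Delaunay surface of weight $w_k$ and invoke the asymptotics result of \cite{KilKRS} to conclude the ends are asymptotic to Delaunay surfaces. The hard part will be exactly this comparison: controlling the holomorphic remainder $B\,dz$ uniformly in $\lambda$ near $\Sp^1$ so that the perturbed frame remains within bounded distance of the Delaunay frame. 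By contrast, once the double zero of $t$ at $\lambda=1$ is exploited, the descent is essentially formal.
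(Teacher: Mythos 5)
Your proposal follows essentially the same route as the paper, which disposes of the theorem in four sentences: cite \cite{KilKRS} for the existence of the positive dressing $h$, assert the closing conditions at $\lambda=1$ for the unitary monodromies, conclude descent via the $\GL\C$ Weierstrass representation, and cite the Delaunay-ends asymptotics theorem. Your direct verification of the closing conditions is a correct and welcome addition that the paper leaves implicit: since the constant gauge $\Lambda^{-1}$ leaves the monodromy literally unchanged (it multiplies $\Phi$ on the right), the gauged potential depends on $\lambda$ only through $t=-\tfrac14\lambda^{-1}(\lambda-1)^2$, which has a double zero at $\lambda=1$; at $t=0$ the potential is entire and nilpotent so all three monodromies are $\identity$ there, the chain rule kills $\partial_\lambda M_i|_{\lambda=1}$, and your computation that both conditions survive conjugation by $h$ (because $M_i|_{\lambda=1}=\identity$) is right.

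Two corrections, however. First, your one-sentence construction of $h$ --- uniqueness of the invariant Hermitian form for an irreducible pair, real-analytic dependence on $\lambda$, ``extends to a positive loop by the structure of the potential,'' plus a removable-singularity argument across the exceptional set $\mathcal{E}$ --- is not a proof: passing from a pointwise unitariser on $\Sp^1\setminus\mathcal{E}$ to a \emph{positive loop} (holomorphic in the unit $\lambda$-disk, with the required normalisation) is exactly the main unitarisation theorem of \cite{KilKRS}, and neither the holomorphic extension to the disk nor the behaviour at the points of $\mathcal{E}$ follows from real-analyticity on the circle alone. The paper simply invokes that theorem, and so should you. Second, the Delaunay asymptotics at $z=0$ and $z=1$ are the content of \cite{KilRS}, not \cite{KilKRS}; the uniform-in-$\lambda$ comparison of the dressed frame with the model Delaunay frame that you flag as ``the hard part'' is precisely what that theorem provides for a potential with a simple-pole Delaunay residue plus holomorphic perturbation (the situation arranged in \cref{sec:regularpoints} via \cref{eq:parametersrelations}), so it too is available as a black box rather than something to redo.
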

        \begin{proof}
          By \cite{KilKRS}, there exists  a positive loop $h:\mathcal{D}_1\to\GL\C$ such that the monodromy of $h\Phi$ is unitary. The local unitary monodromies $M_0$ and $M_1$ satisfy the closing conditions $M_k(0)=\pm\identity$ and $M_k'(0)=0$, for $k\in\lbrace 0,1\rbrace$. Hence $h\Phi$ induces an immersion of the three-punctured sphere via the $\GL\C$ version of the generalised Weierstrass representation. The ends at $z=0$ and $z=1$ are asymptotic to Delaunay cylinders with respective weights $w_0$ and $w_1$ by \cite{KilRS}.
        \end{proof}
        \begin{remark}
          Due to the structure of $\xi_T$, any trinoid $T$ constructed from $\xi_T$ in fact lies in a one-parameter family of trinoids $T_\kappa$ with monotonically varying Delaunay end weights. If $(\mu_0,\mu_1,r_0,r_1,a)$ are the parameters for $T$, then the parameters for the family of trinoids $T_\kappa$ are $(\kappa w_0,\kappa  w_1, \kappa \hat{r}_0, \kappa \hat{r}_1, \sqrt{\kappa}p)$ with $\kappa$ ranging over the interval $(0,1]$.
        \end{remark}
        %
      %

    \subsection{Unitarisability of the monodromy}\label{sec:unitarisableparameters}

      It remains to find values of the 5 parameters in $\xi_T$ so that the monodromy is unitarisable. An algorithm to test the hypotheses of \Cref{5tuple} is as follows. For a 5-tuple $\chi=(\mu_0,\mu_1,r_0,r_1,a)$, consider the $(k+1)$-coefficient of the recurrence in (\ref{eq:recurrence}), which is given by
      \begin{equation}\label{eq:ckplus1}
        c_{k+1}(\chi)=\frac{V(k,\chi)c_k(\chi)+W(k,\chi)c_{k-1}(\chi)}{U(k,\chi)}.
      \end{equation}
      The radicals appearing in $c_{\ell+1}(\chi)$ can be eliminated, reducing the problem to showing that a polynomial is positive in an interval. Let $\Theta=(w_0,w_1,\hat{r}_0,\hat{r}_1,p)\in\R^5$ be a choice of parameters for $\xi_T$. Note that $c_{\ell+1}(y_0,y_1,\hat{r}_0 x^2, \hat{r}_1 x^2, px)$ defines a rational function
      \begin{equation*}
        \frac{\mathcal{P}_\ell(y_0,y_1,\hat{r}_0 x^2, \hat{r}_1 x^2, px)}{\mathcal{Q}_\ell(y_0,y_1,\hat{r}_0 x^2, \hat{r}_1 x^2, px)}
      \end{equation*}
      for some polynomials $\mathcal{P}_\ell,\mathcal{Q}_\ell\in\R[x,y_0,y_1]$ depending on $\ell$ and $\Theta$. Define the polynomial functions $F_k,G_k\in\R[x,y_0,y_1]$
      \begin{align}\label{eq:FandG}
        F_k(x,y_0,y_1)&:=\mathcal{P}_k(y_0,y_1,\hat{r}_0 x^2, \hat{r}_1 x^2, px),\\
        G_k(x,y_0,y_1)&:=F_k(x,y_0,y_1)F_k(x,y_0,-y_1)F_k(x,-y_0,y_1)F_k(x,-y_0,-y_1).\nonumber
      \end{align}
      Since $G_k$ is even in $y_0$ and in $y_1$, then the function $f_k$ depending on $k$ and $\Theta$
      \begin{equation}\label{eq:littlef}
        f_k(x):=G_k\left(x,\sqrt{1-w_0 x^2},\sqrt{1-w_1 x^2}\right)
      \end{equation}
      is in $\R[x]$.
      \begin{proposition}\label{signsparameters}
        Let $\Theta:=(w_0,w_1,\hat{r}_0,\hat{r}_1,p)\in\R^5$ be a choice of parameters for the trinoid potential $\xi_T$ and let
        \begin{equation}\label{eq:signedparametersrelations}
          \chi_{\pm\pm}:=(\pm\mu_0,\pm\mu_1,r_0,r_1,a)=\left(\pm\sqrt{1-w_0t},\pm\sqrt{1-w_1t},\hat{r}_0t,\hat{r}_1t,p\sqrt{t}\right).
        \end{equation}
        Let $k_0\in\N$ be such that for each of the four choices of signs, $U(k,\chi_{\pm\pm})>0$, $V(k,\chi_{\pm\pm})>0$ and $W(k,\chi_{\pm\pm})>0$ for all $k\geq k_0$. Suppose
        \begin{enumerate}[label=(\roman*)]
          \item\label{iparam} $f_{k_0-1}(x)\neq 0$ and $f_{k_0}(x)\neq 0$ along $x\in(0,1)$,
          \item\label{iiparam} for each of the four choices $\chi_{\pm\pm}$, and some $t_0\in(0,1)$,
            \begin{equation}\label{eq:coefficientssigns}
              \sign c_{k_0-1}(\chi_{\pm\pm}(t_0))=\sign c_{k_0}(\chi_{\pm\pm}(t_0)),
            \end{equation}
          \item\label{iiiparam} of the four signs in \ref{iiparam} and odd number are $+$ and an odd number are $-$.
        \end{enumerate}
        Then, the monodromy with parameters $\Theta$ is unitarisable.
      \end{proposition}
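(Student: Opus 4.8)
The plan is to deduce the statement from \Cref{5tuple} by checking, for each value of the spectral parameter (equivalently each $t=\sin^2(\theta/2)\in(0,1)$), that the four tuples $\chi_{\pm\pm}(t)$ all lie in $\mathcal{S}_+\cup\mathcal{S}_-$ and that an odd number of them lie in $\mathcal{S}_+$. By the parameter dictionary \eqref{eq:parametersrelations} and the equivalence noted at the end of \Cref{sec:potential}, unitarisability of the monodromy of $\xi_T$ at a given $\lambda\in\Sp^1$ is the same as unitarisability of the CHE monodromy at the corresponding $t$, so it suffices to run the argument for each $t\in(0,1)$ outside the finite degenerate set (where some $U,V,W$ or $f_k$ vanishes), which is exactly the ``minus a finite set'' allowed in \Cref{constructtrinoids}.

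First I would set up the sign bookkeeping. Writing $x=\sqrt{t}$, the value $F_k(x,\sqrt{1-w_0x^2},\sqrt{1-w_1x^2})$ is precisely the numerator of $c_{k+1}(\chi_{++}(t))$, and the four sign choices $(\pm y_0,\pm y_1)$ entering $G_k$ reproduce the numerators of $c_{k+1}$ at the four tuples $\chi_{\pm\pm}(t)$. Since $G_k$ is even in $y_0$ and in $y_1$, the function $f_k\in\R[x]$ equals the product of these four numerators. Hence hypothesis \ref{iparam}, that $f_{k_0-1}$ and $f_{k_0}$ have no zero on $(0,1)$, forces each of the four numerators of $c_{k_0}$ and of $c_{k_0+1}$ to be nonvanishing, and therefore sign-constant, on the connected interval $(0,1)$. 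Granting in addition that the denominators keep a fixed sign there (addressed below), the coefficients $c_\ell(\chi_{\pm\pm}(\cdot))$ are themselves sign-constant on $(0,1)$ for the relevant indices $\ell$.

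By \Cref{signsrecursion}, once two consecutive coefficients at indices $\ge k_0$ share a sign the entire tail keeps that sign, and $q(\chi_{\pm\pm})$ inherits it ($q\ge 0$ for $+$, $q\le 0$ for $-$). Evaluating at the reference value $t_0$ of \ref{iiparam} exhibits, for each of the four tuples, such a coincidence of signs for two consecutive coefficients; by the sign-constancy in $t$ just established this coincidence persists for all admissible $t\in(0,1)$, so each $\chi_{\pm\pm}(t)$ lies in $\mathcal{S}_+$ or $\mathcal{S}_-$ according to the common sign. Hypothesis \ref{iiiparam} says an odd number of the four common signs are $+$; equivalently the product $q(\mu_0,\mu_1)q(-\mu_0,-\mu_1)q(\mu_0,-\mu_1)q(-\mu_0,\mu_1)$ is negative, i.e.\ the ratio in \eqref{eq:ratioqs} lies in $\R_-$. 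By the remarks following \Cref{unitM0M1}, together with \Cref{connection} and \Cref{5tuple}, this is exactly unitarisability of the monodromy, which completes the argument.

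The main obstacle I anticipate lies in the denominators rather than the numerators: the zero sets of $f_{k_0-1},f_{k_0}$ certify only that the numerators of the coefficients do not vanish, whereas a resonance $U(j,\chi_{\pm\pm}(t))=0$ with $j<k_0$ produces a pole of $c_\ell$ across which its sign can flip, so a single reference value $t_0$ need not determine the types on every component of $(0,1)$. The point to nail down is therefore that, for the admissible weight range, the monotone functions $t\mapsto\mu_k(t)=\sqrt{1-w_kt}$ avoid the resonance values $1+j$ throughout $(0,1)$, so that each denominator factor keeps a constant sign and $(0,1)$ is a single sign-component up to the finite excluded set; granting this, the evaluation at $t_0$ propagates as claimed. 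The even-in-$(y_0,y_1)$ structure of $G_k$, which converts the fourfold sign product into a genuine real polynomial $f_k$ with only finitely many roots, is what makes this whole sign analysis effective and the hypotheses \ref{iparam}--\ref{iiiparam} checkable in practice.
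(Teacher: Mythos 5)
Your proof is correct and follows essentially the same route as the paper: hypothesis \ref{iparam}, via the product polynomial $f_k$, yields nonvanishing and hence sign-constancy of the coefficient functions $c_\ell(\chi_{\pm\pm}(t))$ on $(0,1)$; the evaluation at $t_0$ in \ref{iiparam} together with continuity then places each $\chi_{\pm\pm}$ in $\mathcal{S}_+\cup\mathcal{S}_-$, and \ref{iiiparam} with \Cref{5tuple} gives unitarisability. Your closing caveat about denominator resonances $U(j,\chi_{\pm\pm}(t))=0$ for $j<k_0$ is a genuine point the paper's own proof passes over silently (it is handled only implicitly, via the assumption $\mu_0<1$ invoked later in the proof of \Cref{5parameterfamily}), so flagging and resolving it is a strength of your write-up rather than a deviation from the paper's argument.
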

      \begin{proof}
        By its definition, $f_{\ell+1}(x)$ has a zero along $x\in(0,1)$ if and only if at least one of the four functions $c_{\ell+1}(\chi_{\pm\pm}(t))$ has a zero along $t\in(0,1)$. Thus by \ref{iparam}, none of the eight functions $c_{k_0-1}(\chi_{\pm\pm}(t))$ and $c_{k_0}(\chi_{\pm\pm}(t))$ has a zero along $t\in(0,1)$. By \ref{iiparam} and continuity, all $\chi_{\pm\pm}\in\mathcal{S}_+\cup\mathcal{S}_-$, where $\mathcal{S}_+$ and $\mathcal{S}_-$ are the sets defined in (\ref{eq:SplusSminus}). The monodromy is unitarisable by \Cref{signsparameters}\ref{iiiparam} and \Cref{5tuple}.
      \end{proof}
      \begin{figure}[t] 
        \centering
      	\includegraphics[width=5cm]{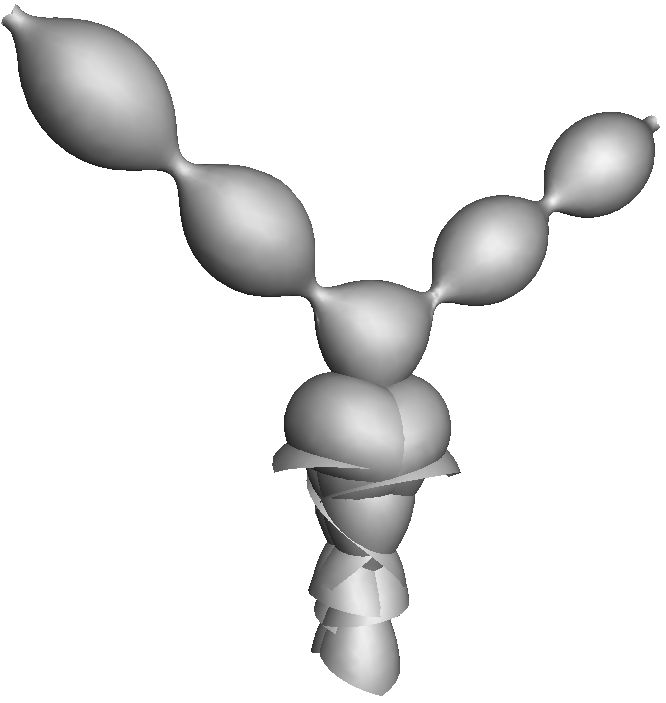}
      	\includegraphics[width=5.5cm]{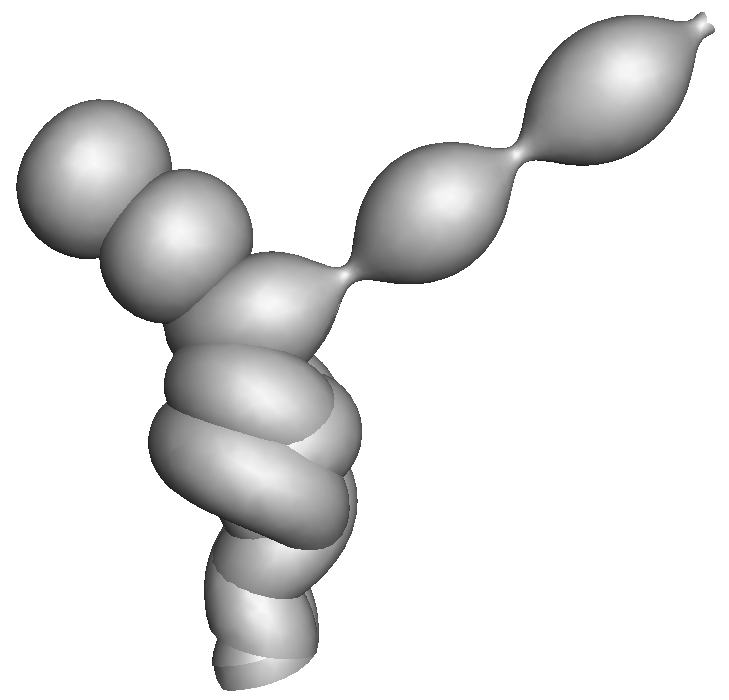}
      	\includegraphics[width=4.1cm]{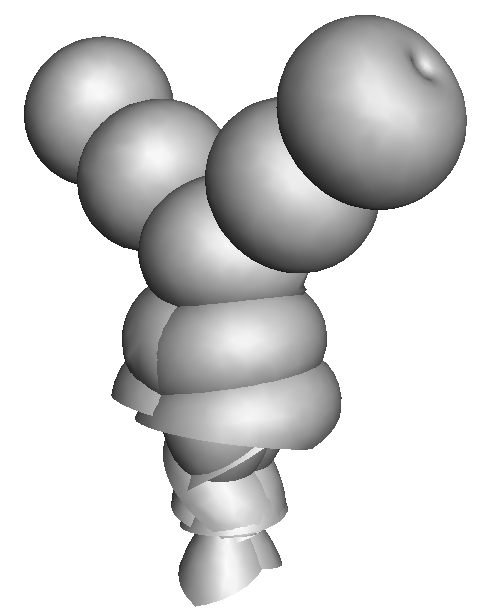}    
        \caption{ \label{fig:examples} \footnotesize Trinoids with one irregular end and two Delaunay ends.  Graphics were produced with CMCLab~\cite{Sch}. The regular end weights are either $\tfrac{1}{2}$ or $-\tfrac{1}{2}$ while the irregular end weights vary. The parameters $(w_0,w_1,\hat{r}_0,\hat{r}_1,p)$ used to construct each of them are $\left(\frac{1}{2},\frac{1}{2},-\frac{1}{8},\frac{1}{8},\frac{1}{8}\right),\quad\left(\frac{1}{2},-\frac{1}{2},-\frac{1}{8},\frac{1}{4},\frac{1}{8}\right),\quad\text{and}\quad\left(-\frac{1}{2},-\frac{1}{2},-\frac{1}{80000000},0,\frac{1}{8}\right)$.}
      \end{figure}
      The examples in \cref{fig:examples} were computed by \Cref{constructtrinoids} and the unitarisability criterion in \Cref{signsparameters}.
      \begin{proposition}\label{5parameters}
        The conditions of \Cref{signsparameters} are satisfied for the choice of parameters
        \begin{equation}\label{eq:5param}
          \Theta=\left(\frac{1}{2},\frac{1}{2},-\frac{1}{8},\frac{1}{8},\frac{1}{8}\right).
        \end{equation}
      \end{proposition}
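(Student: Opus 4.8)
The plan is to verify the three hypotheses \ref{iparam}--\ref{iiiparam} of \Cref{signsparameters} directly for $\Theta=(\tfrac12,\tfrac12,-\tfrac18,\tfrac18,\tfrac18)$. First I would substitute these values into \eqref{eq:signedparametersrelations}, writing the CHE data as explicit functions of $t\in(0,1)$,
\[
  \mu_0^2=\mu_1^2=1-\tfrac{t}{2},\qquad r_0=-\tfrac{t}{8}=-r_1,\qquad a=\tfrac{1}{8}\sqrt{t}.
\]
The simplifications $r_0+r_1=0$ and $|\mu_0|=|\mu_1|=\sqrt{1-t/2}\in(\tfrac1{\sqrt2},1)$ are what keep the computation in hand: the first collapses $W(k)$ to $a(2k-\mu_0-\mu_1)$, and both shrink the polynomials appearing in \ref{iparam}. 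I would then fix a valid $k_0$. Since $|\mu_0|<1$, one has $U(k)=(1+k)(1+k-\mu_0)>0$ for $k\ge0$ and $W(k)>0$ for $k\ge1$ in each of the four sign cases, while the quadratic $V(k)$ has positive leading coefficient and vertex below $k=1$ in every case; a short estimate shows the only borderline case is $\chi_{++}$, where $V(1)$ changes sign as $t$ ranges over $(0,1)$ but $V(2)>0$ throughout. Hence $U,V,W>0$ for all $k\ge2$, uniformly in $t$ and the sign choice, so any $k_0\ge2$ satisfies the positivity requirement; the correct value here, forced by \ref{iiparam}, is $k_0=4$.

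For \ref{iiparam} and \ref{iiiparam} I would run the recurrence \eqref{eq:recurrence}--\eqref{eq:polynomials} at one convenient value, say $t_0=\tfrac12$, computing $c_1,\dots,c_4$ for each of the four tuples $\chi_{\pm\pm}(t_0)$. The result is that the pair $(c_3,c_4)$ is sign-definite in every case, with signs
\[
  \chi_{++}\colon(-,-),\quad \chi_{+-}\colon(+,+),\quad \chi_{-+}\colon(+,+),\quad \chi_{--}\colon(+,+),
\]
which gives \ref{iiparam}. The delay forcing $k_0=4$ rather than $k_0=2$ is real: for $\chi_{++}$ the coefficients oscillate, $c_1<0<c_2$ and $c_2>0>c_3$, because $V(1)$ is small, and two consecutive coefficients of equal sign first occur at indices $3,4$. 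Reading off the four pairs, three are $+$ and one is $-$, so an odd number of each, which is \ref{iiiparam}.

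The heart of the argument is \ref{iparam}: I must show that the one-variable polynomials $f_3,f_4\in\R[x]$ produced by the construction \eqref{eq:FandG}--\eqref{eq:littlef}, which track the zeros of $c_3$ and $c_4$ across the four sign choices, have no zero on $(0,1)$; then the signs found at $t_0=\tfrac12$ persist for all $t\in(0,1)$ and the hypotheses of \Cref{signsparameters} are met. Concretely I would take the numerators of $c_3$ and $c_4$ after the substitutions $r_k=\hat r_kx^2$ and $a=px$, assemble the associated $F$'s and the products $G_3,G_4$ over the four sign flips of $(y_0,y_1)$, and use that each $G_k$ is even in $y_0$ and in $y_1$ to replace $y_j^2$ by $1-w_jx^2$, landing in $\R[x]$. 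These polynomials have rational coefficients, so their non-vanishing on $(0,1)$ can be certified rigorously, e.g.\ by a Sturm sequence on the interval; the equalities $w_0=w_1$, $\hat r_1=-\hat r_0$ and $\mu_0^2=\mu_1^2$ are precisely what keep $\deg f_3$ and $\deg f_4$ small enough for this exact computation to be feasible. This real-root-counting step is the main obstacle, and essentially all of the work lies there.
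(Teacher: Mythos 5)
You have a genuine gap, and it sits exactly where you locate it yourself: the proposition is a finite computational verification, and you never carry out the decisive computation. You assert that the polynomials $f_3,f_4$ tracking $c_3,c_4$ across the four sign flips ``can be certified rigorously, e.g.\ by a Sturm sequence'' to be nonvanishing on $(0,1)$, but you neither exhibit these polynomials nor report the outcome of any root count. Without that step, hypothesis \ref{iparam} of \Cref{signsparameters} is not established, your sign table has only been checked at the single point $t_0=\tfrac12$, and nothing guarantees the signs persist over $t\in(0,1)$ --- which is the whole point of the proposition. The paper's proof consists precisely of this step done explicitly: it writes out two polynomials in closed form, factored with integer coefficients, and applies Sturm's theorem on $(0,1]$, then verifies \ref{iiparam} and \ref{iiiparam} by evaluating the recurrence \eqref{eq:recurrence} at $t_0=\tfrac45$. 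An outline announcing that the main obstacle is surmountable is not a proof of a statement whose entire content is that a finite verification succeeds.

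There is also a substantive discrepancy with the paper that you would have to confront. The paper works at $k_0=2$: it applies Sturm to $f_1,f_2$ and reports $c_k(\chi_{++}(\tfrac45))<0$ and $c_k(\chi_{\pm\mp}(\tfrac45))>0$, $c_k(\chi_{--}(\tfrac45))>0$ for $k\in\lbrace 1,2\rbrace$, whereas you claim $k_0=4$ is forced, with oscillation of $(c_1,c_2,c_3)$ at $\chi_{++}$. Your spot values at $t_0=\tfrac12$ are reproducible from \eqref{eq:recurrence}--\eqref{eq:polynomials} as printed (indeed $c_1=\tfrac18(6(1-\mu)-4-\sqrt{t})<0$ while $c_2(\chi_{++}(\tfrac12))>0$), but note that your stated reason for moving to $k_0=4$ is logically loose: condition \ref{iiparam} only requires \emph{some} $t_0\in(0,1)$, so sign disagreement at your particular $t_0$ does not by itself force larger indices --- what would force them is a sign change of some $c_k(\chi_{\pm\pm}(t))$ on $(0,1)$, i.e.\ failure of \ref{iparam} at that index, and establishing \emph{that} is again exactly the exact polynomial computation you defer. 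Moreover, your data at $t_0=\tfrac12$ together with the paper's at $t_0=\tfrac45$ would jointly entail that $c_2(\chi_{++}(t))$ vanishes somewhere in $(0,1)$, contradicting the paper's Sturm verification at level $k_0=2$; this conflict (an indexing or normalization mismatch between the $f_k$ and the $c_k$, or an error on one side) must be identified and resolved, and resolving it requires precisely the omitted computation. As it stands, your proposal is the paper's algorithm with shifted indices and a different sample point, minus its load-bearing step.
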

      \begin{proof}
        For this choice of $\Theta$, we compute
        \begin{align}\label{eq:polynomialfunctions}
          f_1(x)&=x^4\left(393216 - 245760 x + 243712 x^2 - 57856 x^3 +\right.\nonumber \\
          &\hspace{1cm} \left. 26368 x^4 - 5120 x^5 + 1008 x^6 - 252 x^7 + 27 x^8\right)\nonumber \\
          &\times\left(-1179648 + 1032192 x - 2260992 x^2 - 61440 x^3 +\right.\nonumber \\
          &\hspace{1cm} \left. 1267968 x^4 - 241088 x^5 + 189328 x^6 - 41116 x^7 + 6859 x^8\right)\nonumber \\
          f_2(x)&=x^4\left(21743271936 - 6794772480 x + 6455033856 x^2 - 2021916672 x^3 +\right.\nonumber \\
          &\hspace{1cm} \left. 762642432 x^4 - 174735360 x^5 + 41717760 x^6 -\right. \\
          &\hspace{1cm} \left. 5871616 x^7 + 1031424 x^8 - 109248 x^9 + 12240 x^{10} - 1512 x^{11} + 81 x^{12}\right) \nonumber \\
          &\times\left(-79725330432 + 37144756224 x - 24310185984 x^2 +\right.\nonumber \\
          &\hspace{1cm} \left. 358612992 x^3 + 24553881600 x^4 - 9189408768 x^5 +\right.\nonumber \\
          &\hspace{1cm} \left. 7316135936 x^6 - 1422535168 x^7 + 570771456 x^8 -\right.\nonumber \\
          &\hspace{1cm} \left. 78774080 x^9 + 16092368 x^{10} - 1562408 x^{11} + 130321 x^{12}\right) \nonumber
        \end{align}
        Sturm's theorem applied to $f_1$ and $f_2$ shows that these two polynomials have no zero on the interval $(0,1]$. This verifies \Cref{signsparameters}$(i)$. The conditions $(ii)$ and $(iii)$ are verified by computing at $t_0=\frac{4}{5}$, for $k\in\lbrace 1,2\rbrace$, yielding $c_{k}(\chi_{++}(t_0))<0$, $c_{k}(\chi_{+-}(t_0))>0$, $c_{k}(\chi_{-+}(t_0))>0$ and $c_{k}(\chi_{--}(t_0))>0$.
      \end{proof}
      \begin{theorem}\label{5parameterfamily}
        There exists a five parameter family of \CMC trinoids with two Delaunay ends and one irregular end.
      \end{theorem}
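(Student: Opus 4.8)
The plan is to promote the single explicit example furnished by \Cref{5parameters} to an open set of admissible parameters by a stability argument, and then to observe that this set is genuinely five-dimensional. Since the trinoid potential $\xi_T$ depends on the five independent real parameters $\Theta=(w_0,w_1,\hat r_0,\hat r_1,p)$, and since \Cref{constructtrinoids} turns any admissible $\Theta$ (one for which the monodromy of $\xi_T$ is unitarisable on $\Sp^1$ minus a finite set) into a \CMC trinoid with two asymptotically Delaunay ends at $z=0,1$ and the rank-$1$ irregular end at $z_\infty$ built into $Q_T$, it suffices to show that the set of $\Theta$ satisfying the hypotheses of \Cref{signsparameters} contains an open neighbourhood of the base point $\Theta_0=\left(\tfrac12,\tfrac12,-\tfrac18,\tfrac18,\tfrac18\right)$.

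The heart of the matter is that every hypothesis of \Cref{signsparameters} is an open condition on $\Theta$. Keeping the same discrete data that succeed at $\Theta_0$, namely $k_0=2$ and the test point $t_0=\tfrac45$, I would use that all quantities in play depend polynomially, hence continuously, on $\Theta$: the recurrence coefficients $U(k,\chi),V(k,\chi),W(k,\chi)$ of \eqref{eq:polynomials}, the coefficients $c_k(\chi)$ computed by \eqref{eq:ckplus1}, and the polynomials $f_{k_0-1},f_{k_0}$ of \eqref{eq:littlef}. Condition \ref{iiparam}, the matching-sign requirement \eqref{eq:coefficientssigns} at $t_0$, is a finite set of strict inequalities and is therefore open; condition \ref{iiiparam}, the parity of the four signs, is locally constant once \ref{iiparam} holds. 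To justify a single $k_0$, I note that $p>0$ near $\Theta_0$ gives $a=p\sqrt t>0$, so the leading coefficients in $k$ of $U,V,W$ are positive; this forces $U,V,W>0$ for the four choices $\chi_{\pm\pm}$ for all $k$ beyond a threshold that can be taken uniform on a small neighbourhood, leaving only finitely many $k$ near $k_0$, for which positivity is again open.

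The remaining condition \ref{iparam}, the non-vanishing of $f_{k_0-1}$ and $f_{k_0}$ on the open interval $(0,1)$, is the step I expect to be the main obstacle, because each $f_k$ actually vanishes at the endpoint $x=0$: the explicit expressions in \eqref{eq:polynomialfunctions} carry a factor $x^4$. On any compact subinterval $[\varepsilon,1]$ non-vanishing is stable, since \Cref{5parameters} (via Sturm) provides a positive lower bound for $|f_k|$ there that survives a $C^0$-small change of the coefficients. The delicate point is to prevent a root from detaching from $x=0$ into $(0,\varepsilon)$ under perturbation. The clean way to handle this is to show that the order of vanishing of $f_k$ at $x=0$ is structurally constant in $\Theta$ — that the factor $x^4$ appearing in \eqref{eq:FandG}--\eqref{eq:littlef} persists for all nearby parameters — so that $f_k(x)=x^4 h_k(x)$ with $h_k$ a polynomial whose constant term $h_k(0)$ depends continuously on $\Theta$ and is non-zero at $\Theta_0$; then $h_k$ is zero-free on the \emph{closed} interval $[0,1]$ for $\Theta$ near $\Theta_0$, whence $f_k$ is zero-free on $(0,1)$.

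Intersecting the finitely many open sets arising from \ref{iparam}--\ref{iiiparam} and from the positivity of $U,V,W$ yields an open neighbourhood $\mathcal U\subset\R^5$ of $\Theta_0$ on which all hypotheses of \Cref{signsparameters} hold. For every $\Theta\in\mathcal U$ the monodromy of $\xi_T$ is unitarisable on $\Sp^1$ minus a finite set, so \Cref{constructtrinoids} produces a \CMC trinoid with two Delaunay ends and one irregular end. As the five parameters enter $\xi_T$ independently and $\mathcal U$ is open in $\R^5$, this is the desired five-parameter family.
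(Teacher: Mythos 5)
Your proposal is correct and follows essentially the same route as the paper: both arguments promote the base point $\Theta_0$ of \Cref{5parameters} to an open neighbourhood by observing that $U,V,W$, the $c_k$, and hence $f_{k_0-1},f_{k_0}$ depend continuously (the paper says holomorphically) on $\Theta$, that conditions \ref{iiparam} and \ref{iiiparam} of \Cref{signsparameters} are strict-inequality, hence open, conditions, and that the only delicate point is the zero of $f_k$ at $x=0$, which both treat identically by asserting the order-$4$ vanishing persists under perturbation and writing $f_k(x)=x^4g(x)$ with $g$ bounded away from zero on $[0,1]$, so that a perturbation smaller than $\inf_{[0,1]}|g|$ leaves $f_k$ zero-free on $(0,1]$. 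Your extra remark justifying a uniform $k_0$ via the positivity of the leading coefficients of $U,V,W$ is a small refinement of what the paper passes over as ``trivially preserved,'' not a different method.
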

      \begin{proof}
        The coefficients $U(k)$, $V(k)$ and $W(k)$ of the recurrence in (\ref{eq:recurrence}) depend holomorphically on the parameters of the choice $\Theta$. Each term of the sequence $c_{k+1}(\chi)$ is a rational function $\frac{\mathcal{P}_k(\chi)}{\mathcal{Q}_k(\chi)}$, where $\mathcal{P}_k$ and $\mathcal{Q}_k$ are polynomials in $W(0),\dots,W(k)$ and in $V(0),\dots,V(k)$ and $U(0),\dots,U(k)$ respectively. Thus they also depend holomorphically on the parameters. Note that under the assumptions made for the CHE parameters, in particular $\mu_0<1$, the polynomial $U(k)$ is never zero. Therefore, the function $f_k$ defined in (\ref{eq:littlef}) also depends holomorphically on the parameters.
        Let $\Theta$ be as in \Cref{5parameters}, for which we have checked that the conditions of \Cref{signsparameters} are satisfied. The polynomials $f_1$ and $f_2$ have a zero of order $4$ at $x=0$. A calculation shows that this order is preserved under a small perturbation of $\Theta\in\R^5$. For $i\in\lbrace 1,2\rbrace$, let us denote $f:=f_i$. We have that $f(x)=x^4g(x)$, where $g$ has no zeros on $[0,1]$, that is, $\varepsilon:=\inf_{x\in[0,1]}|g(x)|>0$. We can make a small perturbation of $g$ by choosing $\tilde{g}$ such that $|g-\tilde{g}|_{[0,1]}<\varepsilon$. If we consider $\tilde{f}(x):=x^4\tilde{g}(x)$, then we obtain that
        \begin{align}\label{eq:perturbations1}
          |\tilde{f}(x)|&=x^4|\tilde{g}(x)|=x^4|g(x)-(g(x)-\tilde{g}(x))|\nonumber\\
          &\geq x^4\left||g(x)|-|g(x)-\tilde{g}(x)|\right|\\
          &>x^4|\varepsilon-\varepsilon|=0.\nonumber
        \end{align}
        Hence, $\tilde{f}$ has no zeros on $(0,1]$. It is also easy to check that $x=0$ is not a zero of the perturbation $\tilde{g}$:
        \begin{equation}\label{eq:perturbations2}
          |\tilde{g}(0)|=|g(0)-(g(0)-\tilde g(0))|\geq ||g(0)|-|g(0)-\tilde g(0)||>|\varepsilon-\varepsilon|=0.
        \end{equation}
        It follows that the condition in \Cref{signsparameters}$(i)$ is preserved under small perturbations of $\Theta$. Also conditions $(ii)$ and $(iii)$ are trivially preserved under such perturbations. Hence by \Cref{signsparameters}, the monodromy of $\xi_T$ is unitarisable in a small neighborhood of $\Theta\in\R^5$. \Cref{constructtrinoids} constructs a five parameter family of trinoids with one irregular end.
      \end{proof}
      %
    %

    \subsection{End weights}\label{sec:endweights}

      We conclude by computing the weight at the irregular end of a trinoid, in a similar fashion as in \cite{KilSch}. Let
      \begin{equation}\label{eq:potentialweights1}
        \xi=\begin{pmatrix} 0 & 1\\ tQ & 0\end{pmatrix} dz
      \end{equation}
      be a potential where $Q$ is holomorphic on the circle $|z|=2$ with Laurent series $Q=\sum_{k=-\infty}^\infty a_kz^k$. The first few terms of the series of the monodromy $M$ of the solution of $\;\dd\Phi=\Phi\xi$, $\Phi(2)=\identity$ along the circle $|z|=2$ is as follows.
      \begin{proposition}\label{seriesmonodromy}
        Let $M$ be the monodromy with respect to the curve $\gamma(s) = 2e^{is}$, $s\in[0,2\pi]$ of $\Phi$ for the trinoid potential $\xi_T$ with $\Phi(2) = \identity$. Define $\lambda=e^{i\theta}$ and let $\sum_{k=0}^\infty M_k\theta^k$ be the series expansion of $M$ along $|z|=2$. Then $M_0=\identity$, $M_1=0$ and
        \begin{equation}\label{eq:M2}
          M_2=2\pi i\begin{pmatrix} \frac{a_{-2}}{4}-\frac{a_{-1}}{2} & a_{-2}-a_{-1}-\frac{a_{-3}}{4}\\ \frac{a_{-1}}{4} & \frac{a_{-1}}{2}-\frac{a_{-2}}{4}\end{pmatrix}.
        \end{equation}
      \end{proposition}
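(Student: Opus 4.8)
The plan is to compute $M$ directly from the Picard (Dyson) iteration for $\dd\Phi=\Phi\xi$ along the loop, exploiting two structural features of the potential \eqref{eq:potentialweights1} that collapse the series to finitely many terms. I would parametrise the loop by $\gamma(s)=2e^{is}$, $s\in[0,2\pi]$, so that $\Phi$ satisfies $\tfrac{d\Phi}{ds}=\Phi\,A(s)$ with $A(s)=2i e^{is}\,N(\gamma(s))$ and $N=\begin{pmatrix}0&1\\ tQ&0\end{pmatrix}$. Since $\Phi(2)=\identity$, the monodromy is the terminal value $M=\Phi(2\pi)$, given by the ordered series
\[
M=\identity+\sum_{n\ge1}\int_{0}^{2\pi}\!\!\int_{0}^{s_1}\!\!\cdots\!\int_{0}^{s_{n-1}} A(s_n)\cdots A(s_1)\,ds_n\cdots ds_1 .
\]
I would then write $N=N_0+tQ\,N_1$ with the constant nilpotent matrices $N_0=\begin{pmatrix}0&1\\0&0\end{pmatrix}$, $N_1=\begin{pmatrix}0&0\\1&0\end{pmatrix}$, and record that $t=\sin^2\tfrac{\theta}{2}=\tfrac{\theta^2}{4}+O(\theta^4)$ is $O(\theta^2)$.

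The key observation is that the series truncates modulo $\theta^3$. Each factor of $t$ contributes $O(\theta^2)$, so for the expansion up to order $\theta^2$ only the terms that are constant or linear in $t$ survive. The terms with no factor of $t$ are products of the single matrix $N_0$; because $N_0^2=0$ every such term of order $n\ge2$ vanishes, while the order-one term vanishes because $\int_0^{2\pi}e^{is}\,ds=0$. This yields $M_0=\identity$ and $M_1=0$ at once. For the terms linear in $t$, exactly one factor is $tQ\,N_1$ and the remaining $n-1$ factors are $N_0$; since $N_0^2=0$, a nonzero product $N_0^{a}(tQN_1)N_0^{b}$ forces $a,b\le1$, hence $n\le3$. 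Thus $M_2$ is determined by just three iterated integrals: the single integral ($n=1$), the one surviving arrangement in the $n=2$ term, and the middle-placement arrangement in the $n=3$ term, the latter using $N_0N_1N_0=N_0$.

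To evaluate these I would insert the Fourier expansion $Q(\gamma(s))=\sum_k a_k 2^{k}e^{iks}$ and use the elementary facts $\int_0^{2\pi}e^{ims}\,ds=2\pi\,\delta_{m,0}$ and $\int_0^{2\pi}s\,e^{is}\,ds=2\pi/i$. Only the coefficients $a_{-1}$, $a_{-2}$, $a_{-3}$ survive: the $n=1$ term produces the lower-left entry proportional to $a_{-1}$; the $n=2$ term, with $N_0N_1=\diag(1,0)$ and $N_1N_0=\diag(0,1)$, produces the diagonal entries through the combination $\tfrac{a_{-2}}{2}-a_{-1}$; and the $n=3$ term produces the upper-right entry through $a_{-2}-a_{-1}-\tfrac{a_{-3}}{4}$. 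Finally, extracting the coefficient of $\theta^2$ amounts to replacing $t$ by $\tfrac14$ in these linear-in-$t$ contributions, and collecting the four matrix entries gives the stated $M_2$.

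The main obstacle is organisational rather than conceptual: one must track the noncommutative ordering in $A(s_n)\cdots A(s_1)$ correctly (it is reversed relative to the usual left-multiplication convention because $\dd\Phi=\Phi\xi$ acts on the right), verify that no linear-in-$t$ contribution has been overlooked, and carry the iterated integrals with the $s_1\,e^{is_1}$ boundary term without sign errors. Once the truncation in the previous paragraph is justified, the remaining work is the routine bookkeeping of these three integrals.
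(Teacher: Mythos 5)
Your proposal is correct and, in substance, it is the paper's own computation with different bookkeeping. The paper expands the solution of the gauged system as $\Psi=\Psi_0+\Psi_1t+\bigO(t^2)$, solves the $t^0$ equation exactly --- which works precisely because of your nilpotency observation $N_0^2=0$, giving the unipotent $\Psi_0$ of \eqref{eq:solpsi0} --- and obtains the linear-in-$t$ part by variation of parameters, $\Psi_1\Psi_0^{-1}=\int\Psi_0\xi_1\Psi_0^{-1}$. The resulting kernel $\beta\begin{pmatrix} z-2 & -(z-2)^2\\ 1 & 2-z\end{pmatrix}$ in \eqref{eq:solpsi1} is exactly the sum of your three surviving Dyson contributions ($n=1$ giving $N_1$; $n=2$ giving $N_0N_1$ and $N_1N_0$; $n=3$ giving $N_0N_1N_0=N_0$), and it is then evaluated by the residue theorem just as you propose; your Fourier-mode evaluation with $\int_0^{2\pi}e^{ims}\,ds=2\pi\delta_{m,0}$ and $\int_0^{2\pi}s\,e^{is}\,ds=2\pi/i$ is equivalent to this. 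Two small corrections. First, at $n=2$ \emph{both} arrangements survive, not one --- your own evaluation via $N_0N_1=\diag(1,0)$ and $N_1N_0=\diag(0,1)$ already uses both, and both are needed to produce the two diagonal entries; note also that the residue computation gives the diagonal $2\pi i\,t\,\diag(a_{-2}-2a_{-1},\,2a_{-1}-a_{-2})$, so your quoted combination $\tfrac{a_{-2}}{2}-a_{-1}$ should be rechecked for its overall constant (after $t\mapsto\tfrac{\theta^2}{4}$ one gets $2\pi i\left(\tfrac{a_{-2}}{4}-\tfrac{a_{-1}}{2}\right)$, consistent with \eqref{eq:M2}). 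Second, the proposition concerns the monodromy of the trinoid potential $\xi_T$, whereas you compute with the gauged potential \eqref{eq:potentialweights1}; you should add the one line the paper includes: the two monodromies are related by $M=\Lambda^{-1}P\Lambda$ with $\Lambda=\diag(\lambda^{1/2},\lambda^{-1/2})$, and since $P-\identity=\bigO(\theta^2)$ while $\Lambda=\identity+\bigO(\theta)$, this conjugation changes $M$ only at order $\theta^3$, so $M_0$, $M_1$ and $M_2$ are as you computed. With these two points supplied, your truncation argument and the three iterated integrals constitute a complete proof.
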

      \begin{proof}
        By the gauge $\Lambda:=\diag(\lambda^{1/2},\lambda^{-1/2})$ we obtain
        \begin{equation}\label{eq:potentialweights2}
          \xi=\xi_T.(\Lambda^{-1})=\xi_0+t\xi_1,\quad\xi_0=\begin{pmatrix} 0 & \alpha\\ 0 & 0\end{pmatrix},\quad\xi_1=\beta\begin{pmatrix} 0 & 0\\ 1 & 0\end{pmatrix},
        \end{equation}
        where
        \begin{equation}\label{eq:alphabetat}
          \alpha=dz,\quad\beta=Q\;dz\;\text{ and }\; t=-\frac{1}{4}\lambda^{-1}(\lambda-1)^2=\sin^2\frac{\theta}{2}.
        \end{equation}
        Let $\Psi=\Psi_0+\Psi_1t+\bigO(t^2)$ be the solution to the initial value problem
        \begin{equation}\label{eq:IVPpsi}
          d\Psi=\Psi\xi,\quad\Psi(2)=\identity,
        \end{equation}
        and let $P=P_0+P_1t+\bigO(t^2)$ be the monodromy of $\Psi$. To compute $P_0$ and $P_1$, equate the coefficients as powers of $t$ in
        \begin{equation}\label{eq:powerst}
          d\Psi_0+d\Psi_1t+\bigO(t^2)=\left(\Psi_0+\Psi_1t+\bigO(t^2)\right)\left(\xi_0+\xi_1t\right)
        \end{equation}
        to obtain the two equations
        \begin{equation}\label{eq:IVPpsi0}
          d\Psi_0=\Psi_0\xi_0,\quad\Psi_0(2)=\identity,
        \end{equation}
        \begin{equation}\label{eq:IVPpsi1}
          d\left(\Psi_1\Psi_0^{-1}\right)=\Psi_0\xi_1\Psi_0^{-1},\quad\Psi_1(2)=0.
        \end{equation}
        The solution to (\ref{eq:IVPpsi0}) is
        \begin{equation}\label{eq:solpsi0}
          \Psi_0=\begin{pmatrix} 1 & \int\alpha\\ 0 & 1\end{pmatrix},
        \end{equation}
        where the path integral is along a path based at $2$. Since $\int\alpha=z-2$, then $\int_\gamma\alpha=0$ for $\gamma(s)=2e^{is}$, $s\in[0,2\pi]$. Hence $P_0=\identity$. Solve (\ref{eq:IVPpsi1}) by computing
        \begin{equation}\label{eq:solpsi1}
          \Psi_1\Psi_0^{-1}=\int\Psi_0\xi_1\Psi_0^{-1}=\int\beta\begin{pmatrix} \int\alpha & -\left(\int\alpha\right)^2\\ 1 & -\int\alpha\end{pmatrix}=\int\beta\begin{pmatrix} z-2 & -\left(z-2\right)^2\\ 1 & 2-z\end{pmatrix}.
        \end{equation}
        We are integrating along a path enclosing the two singular points so, by the residue theorem, we obtain
        \begin{equation}\label{eq:p1}
          P_1=\left(\int_\gamma\left(\Psi_0\xi_1\Psi_0^{-1}\right)\right)P_0=2\pi i\begin{pmatrix} a_{-2}-2a_{-1} & 4a_{-2}-4a_{-1}-a_{-3}\\ a_{-1} & 2a_{-1}-a_{-2}\end{pmatrix}.
        \end{equation}
        The series for the monodromy $M$ of $\Phi$ now follows from $M = \Lambda^{-1} P \Lambda$ and expressing the series in terms of $\theta$ as defined above.
      \end{proof}
      The \emph{force} associated to an element in the fundamental group \cite{KKS, Bob, KGB} is the matrix $A\in\su$ in the series expansion of the monodromy
      \begin{equation}\label{eq:seriesexpmonodromy}
        M=\identity+A\theta^2+\bigO(\theta^3)
      \end{equation}
      where $\lambda = e^{i\theta}$. The force is a homomorphism from the fundamental group to $\su\cong\R^3$. Its length $|A|=\sqrt{\det A}$ is the weight of the end.
      \begin{proposition}\label{weights}
        The weights of a trinoid constructed from $\xi_T$ with parameters $(w_0,w_1,\hat{r}_0,\hat{r}_1,p)$ at $z=0,1,\infty$ are respectively $w_0,w_1,w_\infty$ where
        \begin{equation}\label{eq:infinityweight}
          w_\infty=\frac{\pi}{8}\sqrt{(w_0+w_1)^2+8\hat{r}_0(w_1-2\hat{r}_1)-8\hat{r}_1w_0}.
        \end{equation}
      \end{proposition}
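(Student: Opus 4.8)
The weights at the two regular ends require no computation: by \Cref{constructtrinoids} the ends at $z=0$ and $z=1$ are asymptotic to Delaunay surfaces of weights $w_0$ and $w_1$ respectively (via \cite{KilRS}), so only the weight $w_\infty$ at the irregular end $z=\infty$ remains. The plan is to read off the force $A\in\su$ from the monodromy series \eqref{eq:seriesexpmonodromy} around that end and then evaluate $|A|=\sqrt{\det A}$.

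First I would identify the relevant monodromy. A loop around $z=\infty$ in the thrice-punctured sphere is, up to orientation, the loop $\gamma(s)=2e^{is}$ encircling both $z=0$ and $z=1$, so the monodromy of the irregular end is conjugate to the inverse of the matrix $M$ of \Cref{seriesmonodromy}. Because \Cref{seriesmonodromy} gives $M_0=\identity$ and $M_1=0$, comparison with \eqref{eq:seriesexpmonodromy} shows that the force of $M$ is exactly the coefficient $M_2$ displayed in \eqref{eq:M2}. The point that makes everything tractable is that $|A|=\sqrt{\det A}$ depends only on $\det$, which is invariant both under the positive unitarising conjugation produced in \Cref{constructtrinoids} and under orientation reversal, since $\det(-A)=\det A$ for $2\times 2$ matrices. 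Hence I may compute $\det$ directly on the explicit, not-yet-unitarised matrix \eqref{eq:M2}, taking $A=M_2$.

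Next I would compute the Laurent coefficients $a_{-1},a_{-2},a_{-3}$ entering \eqref{eq:M2}. Writing the lower-left entry of the gauged potential \eqref{eq:potentialweights1} as $tQ=Q_T$, the function $Q=-\tfrac{w_0}{4z^2}-\tfrac{w_1}{4(z-1)^2}+\tfrac{\hat{r}_0}{z}+\tfrac{\hat{r}_1}{z-1}+p^2$ is holomorphic on $|z|>1$, and I expand the two terms singular at $z=1$ by $\tfrac{1}{z-1}=\sum_{n\geq 1}z^{-n}$ and $\tfrac{1}{(z-1)^2}=\sum_{m\geq 2}(m-1)z^{-m}$ on $|z|=2$. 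This gives $a_{-1}=\hat{r}_0+\hat{r}_1$, $a_{-2}=-\tfrac{w_0+w_1}{4}+\hat{r}_1$ and $a_{-3}=\hat{r}_1-\tfrac{w_1}{2}$; note the constant term $p^2$ contributes only to $a_0$ and so drops out, consistent with the absence of $p$ in \eqref{eq:infinityweight}.

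Finally I would substitute these into $\det M_2$. Using $(2\pi i)^2=-4\pi^2$ and the fact that the two diagonal entries of \eqref{eq:M2} are negatives of one another, one gets $\det M_2=4\pi^2\bigl[u^2+\tfrac{a_{-1}}{4}(a_{-2}-a_{-1}-\tfrac{a_{-3}}{4})\bigr]$ with $u=\tfrac{a_{-2}}{4}-\tfrac{a_{-1}}{2}$. Expanding and collecting terms, the $\hat{r}_0^2$ and $\hat{r}_1^2$ contributions cancel and the remaining cross terms simplify to $\det M_2=\tfrac{\pi^2}{64}\bigl[(w_0+w_1)^2+8\hat{r}_0(w_1-2\hat{r}_1)-8\hat{r}_1w_0\bigr]$; taking the square root yields the asserted formula for $w_\infty$. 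The only genuine labour is this last algebraic reduction; I expect the main conceptual care to lie in the conjugation- and orientation-invariance argument that licenses computing with the explicit matrix \eqref{eq:M2}, and in using the correct $|z|>1$ Laurent expansion so that the residues extracted by \Cref{seriesmonodromy} are the intended ones.
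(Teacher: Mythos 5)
Your proposal is correct and takes essentially the same route as the paper: the paper likewise reads the force off the coefficient $M_2$ from \Cref{seriesmonodromy}, notes that the weight is $\tfrac{\pi}{2}\sqrt{a_{-2}^2-a_{-1}a_{-3}}$, and then computes the Laurent coefficients of $Q$ on $|z|=2$, and your values $a_{-1}=\hat{r}_0+\hat{r}_1$, $a_{-2}=\hat{r}_1-\tfrac{w_0+w_1}{4}$, $a_{-3}=\hat{r}_1-\tfrac{w_1}{2}$ together with the final algebraic reduction to \eqref{eq:infinityweight} check out. Your preliminary remarks on the conjugation- and orientation-invariance of $\det$ simply make explicit what the paper's terser proof leaves implicit.
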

      \begin{proof}
        By \Cref{seriesmonodromy}, the weight of the irregular end of a trinoid constructed using its potential (\ref{eq:trinoidpotential}) is given by
        \begin{equation}\label{eq:sqrtdetA}
          \frac{\pi}{2}\sqrt{a_{-2}^2-a_{-1}a_{-3}}
        \end{equation}
        where $a_k$ are the Laurent coefficients of $Q$ as before. The result follows by a computation of these coefficients.
      \end{proof}
      \begin{remark}
        The three weights (lengths of the weight vectors) determine the three weight vectors. It is unknown to the authors if a notion of end axis can be established for an irregular end. In the case of all three ends being regular, a necessary condition for the unitarisability of the monodromy comes from the balancing formula \cite{KGB, KilKRS}: If the monodromy is unitary, the weight vectors $W_0, W_1, W_\infty\in\R^3$ satisfy $W_0 + W_1 + W_\infty = 0$. The weights are $w_k=\pm|W_k|$. It follows that $|w_i|\leq|w_j|+|w_k|$ for all permutations of $(0,1,\infty)$. A counterexample can be found in the presence of one irrgular end: For the parameters $(w_0,w_1,\hat{r}_0,\hat{r}_1,p)=\left(\frac{1}{2},\frac{1}{4},\frac{1}{4},\frac{17}{128},\frac{1}{8}\right)$ the resulting monodromy is unitarisable, but the balancing formula does not hold for all permutations of $(0,1,\infty)$. This counterexample questions the suitability of \cref{eq:infinityweight} as the way of measuring the end weight at $\infty$. Since with this definition the notion of balancing is not preserved, it might be interesting if this could be modified so that balancing still holds.
      \end{remark}
      %
    %
  %

  \appendix
  \section{Appendix: Geometry of Unitarisability}\label{sec:unitarisablity}

    \renewcommand{\theequation}{A-\arabic{equation}}  
    \setcounter{equation}{0}  
    
    It remains to prove \Cref{unit2mat}, which give a criterion for the simultaneous unitarisability of two matrices in $\SL(\C)$ in terms of their eigenlines.

    \subsection{Hyperbolic $3$-space}\label{sec:hyperbolic}
      Hyperbolic $3$-space $\Hyp^3$ can be identified with the quotient $\SL(\C)/\SU$. For $X\in\SL(\C)$, let $\ph X\in\Hyp^3$ denote the left coset 
      \begin{equation}\label{eq:leftcoset}
        \lc X:=\lbrace XU\;\mid\;U\in\SU\rbrace.
      \end{equation}
      $M\in\SL(\C)$ acts isometrically on the hyperbolic $3$-space $\Hyp^3$ by
      \begin{equation}\label{eq:actionH3}
        \lc X\mapsto\lc{MX}.
      \end{equation}
      The fixed point set of this action is
      \begin{equation}\label{eq:fix}
        \fix(M)=\lbrace \lc X\in\Hyp^3\;\mid\;X^{-1}MX\in\SU\rbrace.
      \end{equation}
      The fixed point set $\fix(M)$, if non-empty, is called the \emph{axis} of $M$. Hyperbolic $3$-space can be extended to include the sphere at infinity as follows. Let
      \begin{equation}\label{eq:gu2}
        \GU:=\lbrace X\in\mathrm{M}_{2\times 2}\C\;\mid\;XX^*=x\mathbbm{1},\,x\neq 0\rbrace
      \end{equation}
      be the group of unitary similitudes. Let $\Nu:=\mathrm{M}_{2\times 2}\C^*$ and $\Xi:=\Nu/\GU$. Then $\Xi=A\sqcup B$ where
      \begin{equation}\label{eq:AandB}
        A:=\lbrace \lc X\in\Xi\;\mid\;\det X\neq 0\rbrace,\quad B:=\lbrace \lc X\in\Xi\;\mid\;\det X= 0\rbrace\,.
      \end{equation}
      Then $A\;=\;\GL\C/\GU\;=\;\SL(\C)/\SU\;=\;\Hyp^3$. To show $B=\C P^1$, note that any $X\in\Nu$ with $\det X=0$ can be written in the form
      \begin{equation}\label{eq:X}
        X=\begin{pmatrix} x\\ y\end{pmatrix}\begin{pmatrix} a & b\end{pmatrix}
      \end{equation}
      with $(x,y)^t,\,(a,b)\in\C^2\out 0$. The first factor $(x,y)^t$ is unique up to multiplication by an element of $\C^*$, so the map $\phi:B\to\C P^1$ given by
      \begin{equation}\label{eq:maptoCP1}
        \lc{\begin{pmatrix} x\\ y\end{pmatrix}\begin{pmatrix} a & b\end{pmatrix}}\mapsto\ph{x,y}
      \end{equation}
      is well defined, and a bijection, since $\GU$ acts transitively on $\C^2\out 0$.
    %

    \subsection{Unitarisability}\label{sec:unitarisability}

      \begin{definition}\label{def:unitirred}
        For the sake of clarification and to fix notation, we say that
        \begin{itemize}
          \item $M\in\SL(\C)$ is \emph{unitarisable} if there exists $X\in\SL(\C)$ such that $X^{-1} M X\in\SU$.
          \item $M_1,\dots,M_n\in\SL(\C)$ are \emph{simultaneously unitarisable} if there exists $X\in\SL(\C)$ such that $X^{-1} M_k X\in\SU$ for $k\in\lbrace 1,\dots,n\rbrace$.
          \item $M\in\SL(\C)$ is irreducible if it is not \emph{similar} via a permutation to a block upper triangular matrix.
        \end{itemize}
      \end{definition}
      The next proposition follows immediately from (\ref{eq:fix}).
      \begin{proposition}\label{axis}
        $M\in\SL(\C)$ is unitarisable if and only if it has an axis.
      \end{proposition}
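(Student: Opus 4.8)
The plan is to unwind the definitions directly, the only point that genuinely requires verification being that the condition appearing in (\ref{eq:fix}) is well-posed on left cosets rather than on representatives. First I would check that for $U\in\SU$ one has $(XU)^{-1}M(XU)=U^{-1}(X^{-1}MX)U$, and since $\SU$ is a group it is stable under conjugation by its own elements; hence $X^{-1}MX\in\SU$ holds if and only if $(XU)^{-1}M(XU)\in\SU$. Thus membership of a representative in the set $\lbrace Y\in\SL(\C)\mid Y^{-1}MY\in\SU\rbrace$ depends only on the coset $\lc X\in\Hyp^3$, so (\ref{eq:fix}) indeed specifies a genuine subset $\fix(M)\subseteq\Hyp^3$.

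With this in hand the equivalence is immediate. By \Cref{def:unitirred}, $M$ is unitarisable precisely when there exists $X\in\SL(\C)$ with $X^{-1}MX\in\SU$. By the description (\ref{eq:fix}), this is exactly the assertion that the coset $\lc X$ belongs to $\fix(M)$, i.e. that $\fix(M)$ is non-empty. Since by definition $\fix(M)$ is the axis of $M$ whenever it is non-empty, I conclude that $M$ is unitarisable if and only if $M$ has an axis.

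I expect no genuine obstacle in this argument: its entire content is the observation that being conjugable into $\SU$ is a coset property, which is precisely why the proposition follows immediately from (\ref{eq:fix}). The one place to be slightly careful is the well-definedness step above; beyond that the proof is a direct translation between the analytic language of unitarisability and the geometric language of fixed points of the isometric action (\ref{eq:actionH3}) of $M$ on $\Hyp^3$.
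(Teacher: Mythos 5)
Your proposal is correct and matches the paper exactly: the paper offers no written proof, stating only that the proposition ``follows immediately from (\ref{eq:fix})'', and your argument is precisely that unwinding --- unitarisability of $M$ means some $X\in\SL(\C)$ conjugates $M$ into $\SU$, which by (\ref{eq:fix}) says $\lc X\in\fix(M)$, i.e.\ $\fix(M)\neq\emptyset$, which is the definition of $M$ having an axis. Your additional check that membership in $\fix(M)$ is well defined on cosets (since $(XU)^{-1}M(XU)=U^{-1}(X^{-1}MX)U\in\SU$ for $U\in\SU$) is a sound precaution the paper leaves implicit.
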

      \begin{proposition}\label{intersect}
        Individually unitarisable matrices $M_1,\dots,M_n\in\SL(\C)$ are simultaneously unitarisable if and only if their axes intersect in a common point.
      \end{proposition}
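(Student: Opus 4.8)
The plan is to read the statement directly off the description of the fixed point set in \eqref{eq:fix} together with \Cref{axis}. The key observation is that the condition defining simultaneous unitarisability, namely the existence of a single $X\in\SL(\C)$ with $X^{-1}M_kX\in\SU$ for every $k$, is exactly the condition that the point $\lc X\in\Hyp^3$ lie in $\fix(M_k)$ for every $k$. Thus simultaneous unitarisability should translate into the non-emptiness of $\bigcap_k\fix(M_k)$, which is what the proposition asserts.

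First I would verify that the membership condition $X^{-1}MX\in\SU$ depends only on the coset $\lc X$ and not on the chosen representative. Indeed, if $X'=XU$ with $U\in\SU$, then $X'^{-1}MX'=U^{-1}(X^{-1}MX)U$, and since $\SU$ is a subgroup, hence closed under conjugation by its own elements, we have $X^{-1}MX\in\SU$ if and only if $X'^{-1}MX'\in\SU$. This shows that the defining relation of $\fix(M)$ is well posed on $\Hyp^3$, and it is precisely what licenses interpreting the single conjugating matrix as a single point of $\Hyp^3$.

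With this in hand the equivalence is a chain of reformulations: $M_1,\dots,M_n$ are simultaneously unitarisable if and only if there is a coset $\lc X\in\Hyp^3$ with $X^{-1}M_kX\in\SU$ for all $k$, which by \eqref{eq:fix} says $\lc X\in\bigcap_k\fix(M_k)$, i.e. the fixed point sets share a common point. Because each $M_k$ is individually unitarisable, \Cref{axis} guarantees that each $\fix(M_k)$ is non-empty and hence is genuinely the \emph{axis} of $M_k$; the common point of the intersection is then exactly a point at which all $n$ axes meet, and conversely any such common point yields a conjugator $X$ simultaneously unitarising all the $M_k$.

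I expect no serious obstacle here: the result is essentially a restatement of \eqref{eq:fix} once the coset-independence of the fixed-point condition has been checked. The only point deserving a word of care is this well-definedness, together with the remark that individual unitarisability is what ensures the axes exist in the first place, so that the phrase ``their axes intersect in a common point'' is meaningful.
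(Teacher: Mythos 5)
Your proof is correct and is essentially the paper's own argument: both simply unwind the definition of $\fix(M_k)$ in \eqref{eq:fix} to translate the existence of a single conjugator $X$ into a common point of the axes. Your extra check that the condition $X^{-1}MX\in\SU$ depends only on the coset $\lc X$ is a worthwhile refinement the paper leaves implicit, but it does not change the route.
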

      \begin{proof}
        $M_1,\dots,M_n\in\SL(\C)$ are simultaneously unitarisable if and only if there exists $X\in\SL(\C)$ such that
        $X^{-1}M_kX\in\SU$ for all $k\in\lbrace 1,\dots,n\rbrace$. This is equivalent to $X\in\fix(M_k)$ for all $k\in\lbrace 1,\dots,n\rbrace$.
      \end{proof}
      %
    %

    \subsection{Eigenlines}\label{sec:eigenlines}

      The fixed point set $\fix(M)$ of $M\in\SL(\C)\out{\pm\identity}$ is a disjoint union of a (possibly empty) component in $\Hyp^3$ and a component on the sphere at infinity:
      \begin{equation}\label{eq:fixcomponents}
        \fix(M)=(\fix(M)\cap A)\sqcup(\fix(M)\cap B).
      \end{equation}
      The part in $\Hyp^3$, if non-empty, is the \emph{axis} of $M$. The part on the sphere at infinity is the set of \emph{eigenlines} of $M$, as the following proposition shows. Note that this part consists of exactly one or two points, since $M$ has one or two eigenlines.
      \begin{proposition}\label{eigenlines}
        For $M\in\SL(\C)\out{\pm\identity}$, the set $\phi(\fix(M)\cap B)\subset\C P^1$ is the set of eigenlines of $M$.
      \end{proposition}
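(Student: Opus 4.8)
The plan is to show that, under the bijection $\phi\colon B\to\C P^1$ of \eqref{eq:maptoCP1}, the action of $M$ on the boundary stratum $B$ is conjugate to the standard projective-linear action of $M$ on $\C P^1$, whose fixed points are precisely the eigenlines of $M$.

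First I would check that $M$ preserves $B$: if $\det X=0$ then $\det(MX)=\det M\det X=0$ since $\det M=1$, so $\lc{MX}\in B$ whenever $\lc X\in B$. I would also recall that in the extended space $\Xi=\Nu/\GU$ the fixed-point condition reads $\lc{MX}=\lc X$, which for $X\in\SL(\C)\subset A$ reproduces \eqref{eq:fix}.

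Next, for $\lc X\in B$ I would write $X=(x,y)^t\,(a,b)$ as in \eqref{eq:X}. Left multiplication by $M$ then gives $MX=\bigl(M(x,y)^t\bigr)(a,b)$, so it changes only the first factor, replacing $(x,y)^t$ by $M(x,y)^t$ while leaving the row $(a,b)$ untouched. By the definition of $\phi$, this yields $\phi(\lc X)=\ph{x,y}$ and shows that $\phi(\lc{MX})$ is the projective class of $M(x,y)^t$. Hence $\phi$ intertwines the $M$-action on $B$ with the projective-linear map $\ph v\mapsto\ph{Mv}$ on $\C P^1$.

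Finally, since $\phi$ is a bijection, a point $\lc X\in B$ is fixed by $M$ if and only if $\phi(\lc{MX})=\phi(\lc X)$, i.e.\ the class of $M(x,y)^t$ equals $\ph{x,y}$; this holds precisely when $(x,y)^t$ is an eigenvector of $M$, that is, when $\ph{x,y}$ is an eigenline of $M$. Therefore $\phi(\fix(M)\cap B)$ is exactly the set of eigenlines of $M$. The only point requiring care is the equivariance of $\phi$ together with the claim that quotienting by $\GU$ neither creates nor removes fixed points on $B$; both follow immediately from $\phi$ being a well-defined bijection, so I anticipate no genuine obstacle—the argument is essentially a bookkeeping of the coset identifications.
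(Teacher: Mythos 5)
Your proposal is correct and follows essentially the same route as the paper: factor $X=(x,y)^t(a,b)$, use the bijectivity of $\phi$ to translate $\lc{MX}=\lc X$ into the condition $\ph{M(x,y)^t}=\ph{x,y}$ in $\C P^1$, and conclude that the fixed points on $B$ are exactly the eigenlines of $M$. Your explicit remarks that $M$ preserves $B$ and that left multiplication only acts on the column factor are minor elaborations of steps the paper leaves implicit, not a different argument.
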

      \begin{proof}
        Note that $\fix(M)\cap B$ is the set of elements $X\in\mathrm{M}_{2\times 2}\C$ with $\det X = 0$ such that $\lc{MX}=\lc X$. Since $\det X = 0$, $X$ is of the form (\ref{eq:X}) with $(x,y)^t,\,(a,b)\in\C^2\out 0$. Since $\phi$ is a bijection, $\lc{MX}=\lc X$ if and only if $\phi(\lc{MX})=\phi(\lc X)$. That is, if and only if in $\C P^1$
        \begin{equation}\label{eq:Mxy}
          \lc{M\begin{pmatrix} x \\ y\end{pmatrix}}=\lc{\begin{pmatrix} x \\ y\end{pmatrix}}.
        \end{equation}
        That is, if and only if $(x,y)^t$ is an eigenline of $M$.
      \end{proof}
      %
    %

    \subsection{The Klein model of $\Hyp^3$}\label{sec:kleinmodel}

      The Klein model of $\Hyp^3$ is the unit ball $ \B=\lbrace x\in\R^3 \mid \|x\|\leq 1\rbrace$. The sphere at infinity is its boundary $\partial\B=\lbrace x\in\R^3 \mid \|x\|=1\rbrace$.
      The map $K:\Hyp^3\to\overline{\B}$ is defined as the map $\lc X\mapsto XX^*$ followed by the map
      \begin{equation}\label{eq:unitarymap}
        \begin{pmatrix} a+b & c+id \\ c-id & a-b\end{pmatrix}=\frac{1}{a}(b,c,d).
      \end{equation}
      We need some well-known results about this model. For details, see \cite[Sections~II.5, VIII]{Iver} and \cite[Sections~A.4, A.5]{BenP}. Non-trivial isometries in $\Hyp^3$ are identified with elements of $\SL(\C)\out{\pm\identity}$. In particular, we have the following
      \begin{proposition}\label{elliptic}
        $M\in\SL(\C)\out{\pm\identity}$ is unitarisable if and only if $M$ is elliptic as an isometry of $\Hyp^3$.
      \end{proposition}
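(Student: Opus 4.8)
The plan is to read off the result by combining \Cref{axis} with the standard classification of orientation-preserving isometries of $\Hyp^3$. Since $M\in\SL(\C)\out{\pm\identity}$ acts by the non-trivial isometry \eqref{eq:actionH3}, it is exactly one of elliptic, parabolic, or loxodromic. By \Cref{axis} together with the decomposition \eqref{eq:fixcomponents}, $M$ is unitarisable if and only if $\fix(M)\cap A\neq\emptyset$, that is, if and only if $M$ fixes a point of $\Hyp^3$ itself rather than only a point of the sphere at infinity $B$.

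First I would recall the trichotomy, as found in \cite[Sections~II.5, VIII]{Iver} and \cite[Sections~A.4, A.5]{BenP}: a non-trivial isometry of $\Hyp^3$ is \emph{elliptic} precisely when its fixed-point set meets the interior $\Hyp^3$, while a parabolic isometry fixes a single point of the sphere at infinity and a loxodromic isometry fixes exactly two, with no interior fixed point in either case. In the notation of the previous subsections, the interior fixed points constitute $\fix(M)\cap A$ and the boundary fixed points constitute $\fix(M)\cap B$, the latter being the eigenlines of $M$ via \Cref{eigenlines}. Matching the two descriptions, having an axis, i.e. $\fix(M)\cap A\neq\emptyset$, is exactly the defining feature of an elliptic isometry, so $M$ is unitarisable if and only if it is elliptic.

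The only step requiring attention is to confirm that the notion of elliptic used in the cited references agrees with the interior-fixed-point characterization employed here; the references often phrase ellipticity through the eigenvalue or trace condition $\operatorname{tr}M\in(-2,2)$. This is the standard dictionary from the theory of Kleinian groups: an element of $\SL(\C)\out{\pm\identity}$ is conjugate into $\SU$ exactly when it has real trace in $(-2,2)$, exactly when it fixes an interior point of $\Hyp^3$. With this identification the proposition is immediate, and I expect no genuine obstacle beyond citing the appropriate form of the classification.
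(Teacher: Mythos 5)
Your argument is correct, but note that the paper does not prove this proposition at all: it is listed among the well-known facts about the Klein model and simply cited to \cite[Sections~II.5, VIII]{Iver} and \cite[Sections~A.4, A.5]{BenP}, so there is no in-paper proof to match. Your reconstruction is the expected one and, in the paper's setup, nearly definitional: by \eqref{eq:fix}, a point $\lc X\in\fix(M)\cap A$ exists precisely when $X^{-1}MX\in\SU$ for some $X\in\SL(\C)$, which is verbatim the definition of unitarisable, so \Cref{axis} together with the decomposition \eqref{eq:fixcomponents} reduces everything to the trichotomy identifying ``has a fixed point in the interior of $\Hyp^3$'' with ``elliptic''. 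Your closing caveat is the only step with real content, and you resolve it correctly: for $M\in\SL(\C)\out{\pm\identity}$, conjugacy into $\SU$ forces $M$ to be diagonalizable with unit-circle eigenvalues $e^{\pm i\theta}$, hence $\operatorname{tr}M=2\cos\theta\in(-2,2)$ (real trace $\pm 2$ with $M\neq\pm\identity$ would make $M$ parabolic, hence non-diagonalizable and not unitarisable), while conversely a real trace in $(-2,2)$ gives distinct unit-circle eigenvalues and an $\SL(\C)$-conjugation into $\SU$. So the trace-based definition of elliptic in the references agrees with the interior-fixed-point characterization you use, and the proposition follows; as a supplied proof for a statement the paper leaves to the literature, your proposal is complete.
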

      \begin{proposition}\label{straight}
        If $M\in\SL(\C)\out{\pm\identity}$ is unitarisable, then the axis of $M$ in the Klein model is a Euclidean straight line segment in $\overline{\B}$ with two distinct endpoints on $\partial\B$.
      \end{proposition}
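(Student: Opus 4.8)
The plan is to normalise $M$ to a diagonal representative, compute the axis of that representative by hand in the coordinates of \eqref{eq:unitarymap}, and then transport the answer back by an isometry. Since $M\in\SL(\C)\out{\pm\identity}$ is unitarisable, it is conjugate in $\SL(\C)$ to some $U\in\SU\out{\pm\identity}$, and every such $U$ is diagonalisable with distinct eigenvalues $e^{\pm i\alpha}$, $\alpha\in(0,\pi)$. Hence $M=gDg^{-1}$ for some $g\in\SL(\C)$ with $D:=\diag(e^{i\alpha},e^{-i\alpha})$. (That $M$ is elliptic, hence has a genuine axis, is \Cref{elliptic}.)

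The next step is to compute $\fix(D)$ explicitly. For $\lc X\in\Hyp^3$ the fixed-point condition $\lc{DX}=\lc X$ means $DX=XV$ for some $V\in\SU$, so, writing $P:=XX^*$ and using $D^*=D^{-1}$, it becomes $DPD^{-1}=P$. Since $e^{2i\alpha}\neq 1$, this forces the off-diagonal entries of the Hermitian matrix $P$ to vanish. Writing $P=\begin{pmatrix} a+b & c+id\\ c-id & a-b\end{pmatrix}$ as in \eqref{eq:unitarymap}, diagonality gives $c=d=0$ and $a>0$, so $K(\lc X)=\tfrac1a(b,0,0)$; as $P$ runs over the positive-definite diagonal matrices of determinant $1$, the ratio $b/a$ sweeps the open interval $(-1,1)$. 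Thus $K(\fix(D))$ is the open diameter $\lbrace(u,0,0)\mid u\in(-1,1)\rbrace$, a Euclidean straight segment whose two distinct endpoints $(\pm 1,0,0)$ lie on $\partial\B$.

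Finally I would transport this picture by $g$. From $M=gDg^{-1}$ and the definition \eqref{eq:fix} one checks directly that $X^{-1}MX\in\SU$ if and only if $(g^{-1}X)^{-1}D(g^{-1}X)\in\SU$, so $\fix(M)=\lbrace\lc{gY}\mid\lc Y\in\fix(D)\rbrace$; that is, $\fix(M)$ is the image of $\fix(D)$ under the isometry \eqref{eq:actionH3} determined by $g$. In the Klein model this isometry is realised by a projective linear transformation of $\overline{\B}$ preserving the ball and the sphere $\partial\B$, and such maps send straight segments to straight segments and boundary points to boundary points. Hence $K(\fix(M))$ is again a Euclidean straight segment with two distinct endpoints on $\partial\B$, and by \Cref{eigenlines} these endpoints are exactly the two eigenlines of $M$, which are distinct since $D$ has distinct eigenvalues.

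The main obstacle is the single geometric input used in the last step: that the isometric $\SL(\C)$-action \eqref{eq:actionH3} is realised in the Klein model by projective transformations of $\overline{\B}$ (equivalently, that $K$ carries geodesics to straight chords). This is the characteristic feature of the Klein model and is precisely what the cited references \cite{Iver,BenP} supply; the only thing to confirm by hand is that $\lc X\mapsto\lc{gX}$ descends through $K$ to such a projective map, after which everything reduces to the elementary computation above.
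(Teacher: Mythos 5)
Your argument is correct, but it proves the proposition by a genuinely different route than the paper. The paper's proof is essentially a citation: non-emptiness of $\fix(M)$ comes from \Cref{axis}, and then \cite[Section VIII.11]{Iver} is invoked for the classification of isometries of $\Hyp^3$ --- an elliptic isometry fixes pointwise a complete geodesic with two distinct endpoints on $\partial\B$ --- combined with the standard Klein-model fact that geodesics are Euclidean chords. You avoid the classification of isometries entirely: you conjugate to $D=\diag(e^{i\alpha},e^{-i\alpha})$, $\alpha\in(0,\pi)$, compute $\fix(D)$ by hand via the commutation condition $DPD^{-1}=P$ for $P=XX^*$, identify $K(\fix(D))$ as the open diameter using $a^2-b^2=1$, and transport by the action \eqref{eq:actionH3}. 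One small point you should write out rather than assert: the condition $\lc{DX}=\lc X$ is \emph{equivalent} to $DPD^{-1}=P$, and since you need the fixed set to equal the diameter (not merely to be contained in it) you must verify the converse direction; it is the one-line check that $V:=X^{-1}DX$ satisfies $VV^*=X^{-1}DPD^*(X^*)^{-1}=X^{-1}P(X^*)^{-1}=\identity$, together with the observation that every positive diagonal $P$ of determinant one is realised by $X=P^{1/2}$, for which $X^{-1}DX=D\in\SU$ trivially. Your single geometric input --- that $\lc X\mapsto\lc{gX}$ descends through $K$ to a projective transformation of $\overline{\B}$ --- is in fact cheaper than the paper's input and nearly immediate from your own setup: $P\mapsto gPg^*$ is a linear map on the four-dimensional real vector space of Hermitian matrices, the normalisation \eqref{eq:unitarymap} is an affine chart of its projectivisation, and the induced projectivity preserves the quadric $\det P=0$ (hence $\partial\B$) and the compact ball, so it carries chords to chords without any breaking at infinity. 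What your approach buys is self-containedness and an explicit identification of the two endpoints with the eigenlines of $M$, consistent with \Cref{eigenlines}; what the paper's approach buys is brevity, at the cost of outsourcing both the existence of the axis-geodesic and its straightness to \cite{Iver,BenP}.
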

      \begin{proof}
        Since $M\in\SL(\C)$ is unitarisable, by \Cref{axis}, $fix(M)\neq\emptyset$. Following \cite[Section VIII.11]{Iver}, $M$ has two fixed endpoints on $\partial\B$ and the axis through these is a geodesic. Since geodesics are straight line segments, also the axis of $M$ is a straight line segment.
      \end{proof}
      %
    %

    \subsection{The Cross Ratio}\label{sec:crossratio}

      The cross ratio of four distinct points in $\C P^1=\C\cup\lbrace\infty\rbrace$ is denoted by
      \begin{equation}\label{eq:crossratio}
        \left[a,b,c,d\right]:=\frac{(b-c)(d-a)}{(b-a)(d-c)}.
      \end{equation}
      The cross ratio is chosen so that $\left[0,1,\infty,x\right]=x$. The cross ratio is invariant under M\"obius transformations. The cross ratio $\left[a,b,c,d\right]$ of four distinct points is real if and only if the points lie on a circle.

      \begin{proposition}\label{cratio}
        Let $a$, $b$, $c$ and $d$ be distinct points in $\C P^1$, and suppose $[a,b,c,d]\in\R\out 0$, so $a$, $b$, $c$, $d$ lie on a circle $\mathcal{C}$.
        \begin{enumerate}
          \item $\left[a,b,c,d\right]\in\R_+$ if and only if $b$ and $d$ lie in the same connected component of $\mathcal{C}\out{a,c}$.
          \item $\left[a,b,c,d\right]\in\R_-$ if and only if $b$ and $d$ lie on different connected components of $\mathcal{C}\out{a,c}$.
        \end{enumerate}
      \end{proposition}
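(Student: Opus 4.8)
The plan is to exploit the M\"obius invariance of the cross ratio in order to reduce to a normal form on the real axis. First I would choose the unique M\"obius transformation $T$ of $\C P^1$ determined by $T(a)=0$, $T(b)=1$, $T(c)=\infty$. Since the cross ratio is invariant under M\"obius transformations and is normalized so that $\left[0,1,\infty,x\right]=x$, we obtain $T(d)=\left[T(a),T(b),T(c),T(d)\right]=\left[a,b,c,d\right]$. By hypothesis this value is real and nonzero, so $T(d)\in\R\out 0$.

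Next I would use that M\"obius transformations carry circles on the Riemann sphere to circles. Hence $T(\mathcal{C})$ is a circle passing through $0$, $1$, and $\infty$; a circle through $\infty$ is a Euclidean line, and the unique line through $0$ and $1$ is the extended real axis $\R\cup\lbrace\infty\rbrace$. Thus $T$ restricts to a homeomorphism of $\mathcal{C}$ onto $\R\cup\lbrace\infty\rbrace$ which carries the pair $\lbrace a,c\rbrace$ to $\lbrace 0,\infty\rbrace$. Removing these two points splits $\R\cup\lbrace\infty\rbrace$ into exactly the two arcs $\R_+$ and $\R_-$, so $T$ restricts further to a homeomorphism $\mathcal{C}\out{a,c}\to(\R\cup\lbrace\infty\rbrace)\out{0,\infty}=\R_+\sqcup\R_-$.

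Finally I would track connected components. Because the restriction of $T$ is a homeomorphism, it maps the two components of $\mathcal{C}\out{a,c}$ bijectively onto $\R_+$ and $\R_-$, and in particular it preserves the relation of lying in the same component. Since $T(b)=1\in\R_+$ and $T(d)=\left[a,b,c,d\right]$, it follows that $b$ and $d$ lie in the same component of $\mathcal{C}\out{a,c}$ if and only if $T(b)$ and $T(d)$ lie in the same component of $\R_+\sqcup\R_-$, that is, if and only if $\left[a,b,c,d\right]\in\R_+$; and they lie in different components if and only if $\left[a,b,c,d\right]\in\R_-$. This proves (1) and (2) at once. The only point requiring care is this component bookkeeping under $T$: the clean way is to invoke that $T$ restricts to a homeomorphism between the two circles, so that connectedness and the same-component relation are automatically preserved, rather than attempting to argue via orientation or to describe the arcs of $\mathcal{C}$ explicitly.
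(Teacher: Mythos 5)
Your proof is correct and follows essentially the same route as the paper: normalise via the unique M\"obius transformation sending $a,b,c$ to $0,1,\infty$, use cross-ratio invariance to identify $T(d)$ with $\left[a,b,c,d\right]$, and read off the component condition on $\R\cup\lbrace\infty\rbrace$. Your explicit tracking of connected components through the restricted homeomorphism is in fact a slightly more careful rendering of the paper's brief remark that the transformation ``preserves or reverses the order'' of the points on the circle.
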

      \begin{proof}
        There exists a unique M\"obius transformation taking $a$, $b$ and $c$ to $0$, $1$ and $\infty$ respectively, taking circles to circles, and preserving or reversing the order of $a$, $b$, $c$ and $d$ on the circle. Thus we may assume $a=0$, $b=1$, $c=\infty$. Thus $\left[a,b,c,d\right]=\left[0,1,\infty,d\right]=d\in\R\out{0,1}$. Then $a$, $b$, $c$ and $d$ lie on the circle $\mathcal{C}=\R\cup\lbrace\infty\rbrace$, and the two connected components of $\mathcal{C}\out{a,c}$ are $\R_-$ and $\R_+$. The theorem follows by an examination of the two cases $d\in\R_+\out 1$ and $d\in\R_-$.
      \end{proof}
      %
    %

    \subsection{Unitarisability of two matrices}\label{sec:unit2matrices}

      We conclude by proving \Cref{unit2mat}, which gives a characterization for the simultaneous unitarisability of two matrices in $\SL(\C)$.
      
      {\emph{Proof of \Cref{unit2mat}}}.\,
      Since $M_0$ and $M_1$ are individually unitarisable then, by \Cref{axis}, $\fix(M_0)\neq\emptyset$ and $\fix(M_1)\neq\emptyset$. By \Cref{intersect}, $M_0$ and $M_1$ are simultaneuosly unitarisable if and only if $\fix(M_0)\cap\fix(M_1)\neq\emptyset$.

      First suppose that $\fix(M_0)\cap\fix(M_1)\neq\emptyset$. By \Cref{straight}, $\fix(M_0)$ and $\fix(M_1)$ are straight line segments. And since they intersect, they lie in a unique Euclidean plane $\mathcal{P}\subset\R^3$. Then $\mathcal{C}:=\mathcal{P}\cap\partial\B$ is a circle. By \Cref{eigenlines}, the endpoints of $\fix(M_0)$ and $\fix(M_1)$ are $\varphi, \varphi'$ and $\psi, \psi'$ respectively. Since $\fix(M_0)$ and $\fix(M_1)$ intersect at a point inside the disk $\mathcal{P}\cap\B$ bounded by $\mathcal{C}$, it follows that $\psi$and $\psi'$ lie in different connected components of $\mathcal{C}\out{\varphi, \varphi'}$. By \Cref{cratio}, $\left[\varphi, \psi, \varphi', \psi'\right]\in\R_-$.

      Conversely, suppose $\left[\varphi, \psi, \varphi', \psi'\right]\in\R_-$. By \Cref{cratio} $\varphi, \psi, \varphi'$ and $\psi'$ lie on some circle $\mathcal{C}\subset\partial\B$, and $\psi$ and $\psi'$ lie on different connected components of $\mathcal{C}\out{\varphi, \varphi'}$. Let $\mathcal{P}$ be the unique Euclidean plane containing $\mathcal{C}$. By \Cref{eigenlines} and \Cref{straight}, $\fix(M_0)$ is the straight line segment with endpoints $\varphi$ and $\varphi'$, and $\fix(M_1)$ is the straight line segment with endpoints $\psi$ and $\psi'$. Hence $\fix(M_0)$ and $\fix(M_1)$ lie on $\mathcal{P}$ and intersect in the disk $\mathcal{P}\cap\B$ bounded by $\mathcal{C}$. Thus $\fix(M_0)\cap\fix(M_1)\neq\emptyset$. \hfill $\Box$
      %
    %
  %

  \bibliographystyle{amsplain}

  \providecommand{\bysame}{\leavevmode\hbox to3em{\hrulefill}\thinspace}
  \providecommand{\MR}{\relax\ifhmode\unskip\space\fi MR }
  \providecommand{\MRhref}[2]{%
    \href{http://www.ams.org/mathscinet-getitem?mr=#1}{#2}
  }
  \providecommand{\href}[2]{#2}

\end{document}